\newcommand{\intd}{\, \textnormal{d}}
\newcommand{\ud}{\textnormal{d}}
\newcommand{\mb}[1]{\mathbf{#1}}
\newcommand{\mbb}[1]{\mathbb{#1}}
\newcommand{\mcl}[1]{\mathcal{#1}}
\newcommand{\inprod}[2]{\langle{#1},{#2}\rangle}
\newcommand{\supp}{\text{supp}}
\newcommand{\cond}[1]{\, #1 \vert \,}
\theoremstyle{plain}
\newtheorem{theorem}{Theorem}[section]
\newtheorem{lemma}[theorem]{Lemma}
\theoremstyle{definition}
\newtheorem{remark}[theorem]{Remark}
\theoremstyle{remark}
\numberwithin{equation}{section}
\DeclareMathOperator*{\argmin}{arg\,min}
\title{Hausdorff dimension of collision times\\in one-dimensional log-gases}
\author{Nicole Hufnagel, Sergio Andraus${}^1$}
\date{TU Dortmund University, Tsukuba Gakuin University${}^1$\\\today}
\begin{document}
	\maketitle
	
	\abstract{
		We consider systems of multiple Brownian particles in one dimension that repel mutually via a logarithmic potential on the real line, more specifically the Dyson model. These systems are characterized by a parameter that controls the strength of the interaction, $k>0$. In spite of being a one-dimensional system, this system is interesting due to the properties that arise from the long-range interaction between particles. It is a well-known fact that when $k$ is small enough, particle collisions occur almost surely, while when $k$ is large, collisions never occur. However, aside from this fact there was no characterization of the collision times until now. In this paper, we derive the fractal (Hausdorff) dimension of the set of collision times by generalizing techniques introduced by Liu and Xiao to study the return times to the origin of self-similar stochastic processes. In our case, we consider the return times to configurations where at least one collision occurs, which is a condition that defines unbounded sets, as opposed to a single point, namely, the origin. We find that the fractal dimension characterizes the collision behavior of these systems, and establishes a clear delimitation between the colliding and the non-colliding regions in a way similar to that of an order parameter.}
	
	\section{Introduction and main results}
	
	%\textcolor{blue}{Remember to emphasize that our main contribution is the asymptotic behavior of the probability of the process being in $E^r$; the rest of the proof follows Liu and Xiao.}
	
	Dynamical one-dimensional stochastic systems of charged particles have been an important topic of research in recent decades both in physics and mathematics \cite{Forrester10,Handbook15,LesHouches17}. Perhaps the most well-known of these systems, the Dyson model \cite{Dyson62A}, was formulated as a way to describe the Hamiltonian eigenvalue dynamics of a complex system. Conceptually, it is a system of $N$ particles with equal charge that repel each other via a logarithmic electrostatic potential while performing independent Brownian motions on the real line. It was later rediscovered as a description of long, non-intersecting polymer chains \cite{deGennes68}, and of wetting and melting transitions \cite{Fisher84}. The long-range interaction in this system gives it meaningful physical properties, as it allows for the possibility of a phase transition in contrast to short-range interacting one-dimensional systems \cite[Section 5.6]{Ruelle74}.
	
	The connection with random matrix theory lies in the fact that the eigenvalues of real symmetric, complex Hermitian, and quaternion self-dual matrices with independent Brownian motions as entries up to symmetry obey the stochastic differential equation (SDE) of the Dyson model of parameter $k=1/2,1$, and $2$, respectively. In the random matrix literature, it is commonplace to use the parameter $\beta=2k$, but in this paper we use the notation from Dunkl theory, $k$. The significance of the parameter $k>0$ is that it indicates the strength of the electrostatic interaction relative to the fluctuations due to the Brownian motions that drive the process. Hence, when $k$ is large, fluctuations are small and vice-versa; this allows us to understand this parameter as a proxy for the inverse temperature. The particular case $k=1$ has been thoroughly studied due to its determinantal nature; in this case, the Dyson model can be understood as a system of independent Brownian particles conditioned never to collide \cite{KatoriTanemura07}, it has been extended to non-euclidean geometries \cite{Katori17}, and it has been shown to have a profound link to the multiple Schramm-Loewner Evolution \cite{KatoriKoshida21}.
	
	%A similar system is the Wishart-Laguerre process \cite{Bru91, KonigOConnell01}, which is quite similar to the Dyson model. The main differences are that the process lives in the positive half-line, and that there exists an additional repulsion from the origin with a strength given by $k_1$, while the repulsion between particles is given by the positive parameter $k_2$, so this process is indexed by the two-dimensional parameter $k=(k_1, k_2)$. This process is also connected to the eigenvalue evolution of a family of random matrix ensembles: while the Dyson model corresponds to the Gaussian family of random matrix ensembles, the Wishart-Laguerre processes correspond to the Wishart family of ensembles \cite{Wishart28}, and similarly $k_2$ is equal to $1/2, 1$, and $2$ for the real, complex and quaternion versions of the ensembles, respectively. This system is equally important, as it has connections to chiral random matrices \cite{VerbaarschotZahed93} and applications in wireless communications \cite{ZanellaEtAl09,LopezMartinezEtAl15}. 
	
	One important feature of the Dyson model is that it can be understood as a multivariate extension of Bessel processes \cite{KaratzasShreveTextbook}, perhaps one of the best-studied stochastic processes after Brownian motion (or Wiener processes). For $t>0$, consider a Brownian motion $(B_t)_{t\geq0}$ and the parameter $k>0$; then, given an initial condition $Y_0=y>0$, the Bessel process $(Y_t)_{t\geq0}$, with $Y_t\geq 0$ for all $t$ is described by the SDE
	\begin{align*}
		\ud Y_t=\ud B_t+k\frac{\ud t}{Y_t}.
	\end{align*}
	We suppress the $k$-dependence on the lhs for simplicity. The transition probability density (TPD) of this process is given by a Bessel function, hence the name. The SDEs that describe the motion of the particles in the Dyson model have this form: the martingale part is a Brownian motion, while the drift term is the sum of the inverse of differences of particle coordinates. The coefficient $k$ plays the same role in both systems, but it also holds the meaning of dimensionality in the Bessel process. Explicitly, if $(B_{t,i})_{t\geq0}$ are independent Brownian motions for $1\leq i\leq d$ and we set $Y_t^{(d)}=\sqrt{\sum_{i=1}^d B_{t,i}^2}$, then the SDE of $(Y_t^{(d)})_{t\geq0}$ coincides with that of $(Y_t)_{t\geq0}$ provided $k=(d-1)/2$ is satisfied. In that case, we call $(Y_t)_{t\geq0}$ a Bessel process of $d$ dimensions. However, the SDE for $(Y_t)_{t\geq0}$ has a unique strong solution for any real positive $k$ \cite{RevuzYor}, so we do not need to restrict ourselves to integer or half-integer values of $k$. Moreover, it is well-known that, given enough time, Bessel processes hit the origin with probability one regardless of the starting point whenever $0<k<1/2$ \cite{KatoriTextbook}. The fact that even in this case Bessel processes have a unique strong solution may be surprising given the form of the SDE. Nevertheless, this situation points to the intuition that for $0<k<1/2$ the repulsion from the origin is not large enough to keep the process from returning to it, which makes this parameter region interesting.
	
	It turns out that the properties of the Bessel process outlined above are shared by the Dyson model: it has a unique strong solution for all $k>0$ \cite{CepaLepingle, GraczykMalecki}, and for $0<k<1/2$, two particles will collide almost surely given enough time \cite{Demni09}. Another important aspect is that the Dyson model can be formulated as the continuous (or ``radial'') part of a Dunkl process \cite{Dunkl89, RoslerVoit98, GallardoYor05} of type $A_{N-1}$. Each type of radial Dunkl process obeys a set of SDEs similar to that of a Bessel process. Because of this similarity, radial Dunkl processes are also called \emph{multivariate Bessel processes}; for the remainder of the paper we will refer to them as such, that is, we will refer to the Dyson model as the multivariate Bessel process of type $A_{N-1}$ \cite{DunklXu}.
	
	The main advantage of the Dunkl process formulation of these systems is that, in contrast with the random matrix formulation, the parameter $k$ is never required to take any particular value. It is then natural to consider their behavior, for example, when $k$ tends to infinity, a topic that has been thoroughly studied \cite{Fekete23, Deift, AKM12, AKM14, AV19A, AV19B, AHV, VW}. Another natural case to consider is the main focus of this paper, the case where $0<k<1/2$, and collisions occur almost surely. Given that collisions characterize the behavior of these systems in this case, it is natural to wonder about the properties of visiting times to the origin of the single-variable Bessel process. There is a wealth of research on the subject \cite{Kent78, PitmanYor82, PitmanYor99A, PitmanYor99B}, but the most relevant for our purposes is the work of Liu and Xiao \cite{LiuXiao98}. In their paper, they introduce a strategy to derive the Hausdorff dimension, a particular definition of fractal dimension, of visiting times to the origin of stochastic processes that are \emph{semi-stable}, or in other words, self-similar. For example, a Brownian motion $(B_t)_{t\geq0}$ started at the origin is $1/2$-semi-stable because it satisfies the equality
	\[B_{ct}=c^{1/2}B_{t}\]
	in distribution for every $c>0$. It is a straightforward task to show that the Bessel process $(Y_t)_{t\geq 0}$ satisfies all of the conditions that lead to Theorem~4.1 in \cite{LiuXiao98}. Namely, $(Y_t)_{t\geq0}$ is $1/2$-semi-stable and its transition density obeys a particular power-law asymptotic form near the origin (see Lemma~\ref{lm:boundsA} below), so the Hausdorff dimension of the visiting times to the origin for $(Y_t)_{t\geq0}$ is
	\[\frac{1}{2}\Big(1-\min(1,2k)\Big).\]
	
	In view of this property of Bessel processes, one may ask whether one can make a similar statement for multivariate Bessel processes, as they can be shown to be $1/2$-semi-stable. In fact, the notion of a phase transition provides a physical motivation for this question; thermodynamic functions for a multiple-particle system are believed to be piecewise-analytically dependent on the parameters of the system ($k$ in this case) and singularities indicate the existence of a phase transition. In our case we do not consider a thermodynamic function, but such a piecewise continuous nature of the Hausdorff dimension on $k$ for multivariate Bessel processes would indicate a behavior similar to a phase transition at $k=1/2$. The objective of this paper is to answer this question in the affirmative for the case $A_{N-1}$, namely, the Dyson model. Henceforth, all objects defined here refer to the root system of type $A_{N-1}$. Let $(X_t)_{t\geq 0}$ denote these processes with initial configuration $X_0=x$, and let $W$ denote the open subset of $\mbb{R}^N$ where they are defined, and let $\overline{W}$ be its closure. Moreover, denote by $\partial W$ the boundary of $W$, namely, the configurations where a collision occurs. The main result is summarized in the following theorem.
	\begin{theorem}\label{th:main}
		Fix $x\in \overline W$, and define the set of times the process $X_t$ spends at the boundary of the Weyl chamber $\partial W$ by $X^{-1}(\partial W)$, that is,
		\[X^{-1}(\partial W):=\{t\geq 0\big| X_t\in\partial W\}.\] 
		The Hausdorff dimension of collision times for the multivariate Bessel processes of type $A_{N-1}$ is given by
		\begin{equation}
			\dim\big( X^{-1}(\partial W)\big)=\frac{1}{2}\Big(1-\min(1,2k)\Big)\notag
		\end{equation}
		$\mbb P^x-$almost surely.
	\end{theorem}
	%\textcolor{blue}{Should add somewhere that this can be easily conclude after performing this ressult for $x\in \partial W$ as the first hitting time is finite and the countable stability of the Hausdorff measure.}
	Explicit definitions of the objects above will be given in the following section. The main difficulty in obtaining this result lies in the type of event we are interested in observing. In \cite{LiuXiao98}, the event is the return to the origin, whereas in our case the event is comprised of all configurations such that $X_{t,i}=X_{t,j}$ for some $i\neq j$. This means that in order to extend Liu and Xiao's machinery to our setting, we must derive similar asymptotics for the transition function in a set that is not bounded. Our key contribution is the derivation of a new asymptotic behavior of the probability of a collision being close to occur, which is a condition that defines an unbounded set and which is fundamentally different from the case covered in \cite{LiuXiao98}. The key to overcome this hurdle lies in making use of an asymptotic relation by Graczyk and Sawyer which removes the difficulty of working with Dunkl-Bessel functions \cite{GraczykSawyer}, decomposing the collision events into simpler events, and performing adequate coordinate transformations to obtain the corresponding asymptotics that lead to the theorem.
	
	The physical significance of this result is that the Hausdorff dimension characterizes the behavior of these processes, highlighting the fact that for $k\geq 1/2$ no collisions occur, while when $k<1/2$ collisions are frequent enough that the set of collision times has fractal structure. In this sense, with $k$ as main parameter one can identify an ordered (non-colliding) and a disordered (colliding) parameter region, as the Hausdorff dimension behaves very much like a thermodynamic function in this case. While there is precedent of an abstract phase transition in these systems \cite{ValkoVirag}, the results obtained in the present work point to a phenomenon based on collisions, which are simple, concrete events. Another aspect that must be pointed out is that the matrix eigenvalue formulations of these processes show no collisions if they are driven by Brownian motions; this is not the case for \emph{fractional} Brownian motions, which can overcome the log-potential repulsion between eigenvalues \cite{SongXiaoYuan21}.
	
	We also point out that Theorem~\ref{th:main} seems to indicate a Hausdorff dimension of $1/2$ for the limit $k=0$. Our result does not cover this case, but for independent Brownian motions one can readily apply the machinery of \cite{LiuXiao98} to the difference of two Brownian motions, itself a one-dimensional Brownian motion, to see that the collision times have a Hausdorff dimension of $1/2$, as expected.
	
	We present the proof of our main statement in two parts, an upper bound part and a lower bound part. For the upper bound, we impose a small technical condition on the initial configuration of the multivariate Bessel processes in order to control the integrals involved in the derivation of asymptotics near collisions, and we then follow the framework of Liu and Xiao. % This technical condition is then eliminated using the countable stability of the Hausdorff dimension. 
	For the lower bound, we rely on the capacity argument, otherwise known as Frostman's theorem \cite{Kahane94}. However, here we must prove a new lemma to eliminate the technical condition mentioned above. Moreover, we prove the lower bound of the Hausdorff dimension over intervals starting from a small but positive initial time, $\epsilon$, as a modification of the Liu-Xiao proof. After this, we follow their strategy to extend the proof to intervals starting from zero. 
	%we derive a different set of asymptotic results from those in the derivation of the upper bound in order to make use of this theorem for closed and bounded time intervals in $\mbb{R}_+$. We use probabilistic arguments to extend this result from closed intervals in $\mbb{R}_+$ to all of the non-negative real line.
	
	The paper is structured as follows. We give the definition of the multivariate Bessel processes of type $A_{N-1}$, as well as that of all related quantities in Section~\ref{sc:setting}. We also give a brief review of Hausdorff dimensions and some related lemmas. In Section~\ref{sc:upper}, we present the proof of the upper bound of the Hausdorff dimension of collision times. We prove several lemmas related to asymptotics of the transition functions near collision events, and we follow them with the proof of the main statement of the section. We follow the same basic structure in Section~\ref{sc:lower} to prove the lower bound, which completes the proof of Theorem~\ref{th:main}. We finish the paper by making comments on these results and by discussing open problems in Section~\ref{sc:conclusions}.
	
	\section{Setting, definitions, and properties}\label{sc:setting}
	\subsection{Multivariate Bessel processes}
	In this section, we briefly introduce the diffusions and notations used throughout the paper. For more details we refer to \cite{DunklXu,DunklProcesses}.  We denote with $(\Omega,\mcl{F},\mbb{F},\mbb{P})$ the underlying probability space, where $\mbb{F}:=(\mathcal{F}_t)_{t\geq 0}$ is a filtration with the natural $\sigma$-algebra $\mathcal{F}_t:=\sigma(X_s : s \leq t)$ corresponding to the respective process. For a neater notation we use $\mbb{P}^x(A):=\mbb{P}(A\cond{} X_0=x)$ with $A\in \Omega$. 
	
	We define by
	\begin{align*}
		W= W_{A_{N-1}}:= \{x\in \mathbb{R}^N\cond{} x_1<x_2<\dots<x_N\}.
	\end{align*}
	the open Weyl chamber of type $A_{N-1}$. 
	
	The multivariate Bessel process of type $A_{N-1}$ is the solution of the stochastic differential equation 
	\begin{align*}
		\ud X_{t,i}=\ud B_{t,i}+k\sum\limits_{\substack{j=1\\j\neq i}}^N\frac{\ud t}{X_{t,i}-X_{t,j}},\\
	\end{align*}
	for $i=1, \dots, N$ with initial condition $X_0=x\in \overline{W}$, where $(B_t)_{t\geq 0}$ is a standard multivariate Brownian motion, that is, each of its components is an independent Brownian motion.
	%\[\mathbb{E}[B_{i,t}B_{j,s}]=\delta_{i,j}|t-s|\]
	Also, $X_t\in \overline{W}$ for all $t$ almost surely, and the solution to this SDE can be continued after collisions with $\partial W$ \cite{CepaLepingle}. It is known that this set of SDEs has a unique strong solution for $k>0$ \cite{CepaLepingle,GraczykMalecki,Demni09}. From a physical point of view, the dimension $N$ represents the number of particles, while $k> 0$, representing the inverse temperature, regulates the interactions. These are Feller processes, and as such they satisfy the strong Markov property; this fact follows from \cite[Proposition 4.5]{RoslerVoit98} and \cite[Theorem 4.8]{Rosler98} after specialization to the root system of type $A_{N-1}$.
	
	Furthermore, we consider the edge set
	\begin{align*}
		E^r:=\left\{x\in \overline W\cond{} \exists\ i\in \{1,\dots, N-1\} : 0\leq x_{i+1}-x_i\leq r\right\}
	\end{align*}
	with thickness $r\geq 0$ which is an auxiliary set for later calculations of the hitting times with $\partial W=E^0$. In particular, $cE^r=E^{cr}$ for $r,c>0$ is an useful property which we will often use when performing transformations.
	
	In the case $0<k<\frac{1}{2}$ the corresponding process almost surely hits the edge $\partial W$ \cite{Demni09}. We denote the standard inner product of $x,y\in\mbb{R}^N$ by $\inprod{x}{y}$, and the $2$-norm by $\|x\|=\sqrt{\inprod{x}{x}}$. The transition probability density of $(X_t)_{t\geq 0}$ ending at $y$ in the corresponding Weyl chamber after a time $t>0$ is expressed as \begin{align*}
		p_k(t,y\cond{}x):= \frac{e^{-\frac{\|x\|^2+\|y\|^2}{2t}}}{c_kt^{\frac{N}{2}}}D_k\left(\frac{x}{\sqrt{t}},\frac{y}{\sqrt{t}}\right)w_k
		\left(\frac{y}{\sqrt{t}}\right).
	\end{align*}
	The weight function $w_k$ is specified by
	\begin{align*}
		w_k(x):=\prod\limits_{1\leq i<j\leq N}(x_j-x_i)^{2k},
	\end{align*}
	$D_k$ is the Dunkl-Bessel function \cite{BakerForrester97}, which satisfies the bounds
	\[e^{-\|x\|\|y\|}\leq D_k(x,y)\leq e^{\|x\|\|y\|}.\]
	The normalization constant $c_k$ depends both on $k$ and $N$; we will leave this dependence implicit. We also introduce the notation
	\begin{align*}
		P_k(t,A\cond{} x)=\int\limits_A p_k(t,y\cond{}x)\intd y
	\end{align*}
	for $A\subseteq \overline{W}$. For later calculations we introduce the sum of multiplicities
	\begin{align}
		\gamma:=\sum\limits_{1\leq i<j\leq N}k=\frac{kN(N-1)}{2}.\label{eq:gamma}
	\end{align}
	
	\subsection{Hausdorff dimension - Definition and properties}
	The Hausdorff dimension is a generalization of the well-known dimension concept. This means, well-known geometric objects like straight lines, hyperplanes and others with intuitive dimensionality keep the same dimension. The Hausdorff dimension offers a finer distinction, since it admits not only natural numbers. 
	
	We denote by $B(x,R):=\{y\in \mbb{R}^N:\|y\|\leq R\}$ the closed $N$-dimensional ball centered at $x$ with radius $R$. For the monotonically increasing monomial (on the positive halfline) of power $\alpha\geq 0 $ the Hausdorff measure of $E\subset \mathbb{R}^N$ is defined as 
	\begin{align*}
		m_\alpha(E):=\lim\limits_{\varepsilon\to 0} \inf\Big\{\sum_{i=1}^\infty (2r_i)^\alpha: \exists r_i<\varepsilon \text{ with } E\subset \bigcup\limits_{i=1}^\infty B(x_i,r_i)\Big\}.
	\end{align*}
	The radius $r_i$ may be equal to zero, and therefore the covering can be a finite union. The Hausdorff dimension is defined by the following lemma. 
	\begin{lemma}[{\cite[8.1 Hausdorff dimension]{Adler1981}}]
		\label{lm_Hausdorff_dimension}
		For any set $E\subset \mathbb{R}^n$ there exists a unique number $\alpha^\star$, called the Hausdorff dimension of $E$, for which 
		\begin{align*}
			\alpha <\alpha^\star \Rightarrow m_\alpha(E)=\infty, \quad \alpha >\alpha^\star \Rightarrow m_\alpha(E)=0. 
		\end{align*}
		This number is denoted by $\dim (E)$ and satisfies
		\begin{align*}
			\alpha^\star=\dim(E)=\sup\{\alpha>0:m_\alpha(E)=\infty \}=\inf \{\alpha>0:m_\alpha(E)=0\}.
		\end{align*}
	\end{lemma}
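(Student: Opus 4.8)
The plan is to reduce the entire statement to a single monotonicity property of the Hausdorff pre-measure and then read off the dichotomy. I would first introduce the $\veps$-approximation $m_\alpha^\veps(E)$, the infimum of $\sum_i (2r_i)^\alpha$ over all countable ball coverings of $E$ with radii $r_i<\veps$, so that $m_\alpha(E)=\lim_{\veps\to 0} m_\alpha^\veps(E)$; note that $m_\alpha^\veps$ increases as $\veps$ decreases, so $m_\alpha^\veps(E)\leq m_\alpha(E)$ for every $\veps$. The central claim I would establish is: whenever $\alpha<\beta$ and $m_\alpha(E)<\infty$, one has $m_\beta(E)=0$.

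For the central claim, I would take any admissible covering with $r_i<\veps$ and use $\beta>\alpha$ to bound each term by $(2r_i)^\beta=(2r_i)^{\beta-\alpha}(2r_i)^\alpha\leq (2\veps)^{\beta-\alpha}(2r_i)^\alpha$. Summing and passing to the covering infimum yields $m_\beta^\veps(E)\leq (2\veps)^{\beta-\alpha} m_\alpha^\veps(E)\leq (2\veps)^{\beta-\alpha} m_\alpha(E)$. Letting $\veps\to 0$, the prefactor vanishes while $m_\alpha(E)$ stays finite, so $m_\beta(E)=0$. The contrapositive, $m_\beta(E)>0\Rightarrow m_\alpha(E)=\infty$ for all $\alpha<\beta$, is the form I would use below.

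With the claim available, I would set $\alpha^\star:=\inf\{\alpha>0:m_\alpha(E)=0\}$ and verify the two implications. For $\alpha>\alpha^\star$, the definition of the infimum supplies some $\alpha'$ with $\alpha^\star\leq \alpha'<\alpha$ and $m_{\alpha'}(E)=0<\infty$, and the claim forces $m_\alpha(E)=0$. For $\alpha<\alpha^\star$, if $m_\alpha(E)$ were finite the claim would give $m_\beta(E)=0$ for every $\beta>\alpha$, hence $\alpha^\star\leq \alpha$, a contradiction; therefore $m_\alpha(E)=\infty$. This is precisely the asserted dichotomy, and it pins down $\alpha^\star$ uniquely. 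To match the two displayed formulas I would observe that the dichotomy places $\{\alpha:m_\alpha(E)=\infty\}$ between $(0,\alpha^\star)$ and $(0,\alpha^\star]$, so its supremum equals $\alpha^\star$, and symmetrically $\{\alpha:m_\alpha(E)=0\}$ lies between $(\alpha^\star,\infty)$ and $[\alpha^\star,\infty)$, so its infimum equals $\alpha^\star$.

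I do not expect a serious obstacle, since the argument is elementary; the only step needing care is the interchange of the covering infimum with the limit in $\veps$, which is why I would record at the outset that the pre-measures $m_\alpha^\veps$ are monotone in $\veps$ and dominated by $m_\alpha(E)$ — this makes the bound $m_\beta^\veps(E)\leq (2\veps)^{\beta-\alpha} m_\alpha(E)$ legitimate uniformly in $\veps$ and lets the limit be taken directly. One should also check that $\alpha^\star$ is finite for $E\subset\mbb{R}^n$, which follows from the standard fact that $m_\alpha(E)=0$ once $\alpha$ exceeds $n$, guaranteeing the defining set of the infimum is nonempty.
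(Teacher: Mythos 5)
Your proof is correct, and it is essentially the standard argument: the paper itself offers no proof of this lemma, citing it directly from Adler's textbook, and the argument given there (and in Falconer) is exactly your comparison step that $m_\alpha(E)<\infty$ for $\alpha<\beta$ forces $m_\beta(E)=0$ via the bound $(2r_i)^\beta\leq(2\veps)^{\beta-\alpha}(2r_i)^\alpha$, followed by taking $\alpha^\star$ as the infimum of the null exponents. Your handling of the pre-measure monotonicity and the finiteness of $\alpha^\star$ (via $m_\alpha(E)=0$ for $\alpha>n$) is sound, so there is nothing to correct.
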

	This lemma implies that $m_\alpha(E)\in\{0,\infty\}$, so if it is finite it must be zero. The Hausdorff dimension satisfies the following properties.
	\begin{itemize}
		\item \textit{Countable stability \cite[2.2 Hausdorff dimension]{Falconer2013fractal}} If $F_1,F_2,\dots $ is a (countable) sequence of sets then $$\dim \Big(\bigcup\limits_{i=1}^\infty F_i\Big)=\sup\limits_{i\in\mathbb{N}}\dim (F_i).$$
		\item\textit{Monotonicity \cite[2.2 Hausdorff dimension]{Falconer2013fractal}} If $E\subset F$ then $\dim (E)\leq \dim (F)$.
	\end{itemize}
	
	An often used tool is the capacity argument. 
	\begin{lemma}[{\cite[Lemma 8.2.3]{Adler1981}}]
		\label{lem_capacity}
		Let $A$ be a compact subset of $\mathbb{R}^n$. Suppose there exists a positive measure $\mu$ concentrated on $A$, $\mu(\mathbb{R}^n)=\mu(A)$, %satisfying 
		%\begin{align*}
		%	A_1,A_2,\dots \mnm{ disjoint} \Rightarrow \mu \left(\bigcup\limits_{i=1}^\infty A_i\right)=  \sum_{i=1}^\infty \mu (A_i), \\
		%	\mu(\mathbb{R}^n)=\mu(A)=1
		%\end{align*}
		and a finite $\alpha $ such that the energy integrals 
		\begin{align*}
			I_\beta(\mu)=\int\limits_A\int\limits_A \frac{\ud \mu (x) \ud \mu(y)}{\|x-y\|^\beta} 
		\end{align*}
		are finite for all $\beta<\alpha$. Then $\dim(A)\geq \alpha.$
	\end{lemma}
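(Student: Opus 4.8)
The plan is to establish this as the energy (lower-bound) direction of Frostman's theorem via the mass distribution principle. Fix any $\beta<\alpha$, so that $I_\beta(\mu)<\infty$ by hypothesis. The strategy is to promote the averaged finiteness encoded in $I_\beta(\mu)$ into a genuinely pointwise bound on the $\mu$-mass of small balls, and then to convert such a bound into a strictly positive lower bound for $m_\beta(A)$, which by Lemma~\ref{lm_Hausdorff_dimension} forces $\dim(A)\geq\beta$.

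First I would introduce the $\beta$-potential
\begin{align*}
	f(x):=\int_A\frac{\ud\mu(y)}{\|x-y\|^\beta},
\end{align*}
so that $\int_A f(x)\,\ud\mu(x)=I_\beta(\mu)<\infty$ by nonnegativity and Tonelli. By Markov's inequality $\mu(\{x\in A: f(x)>M\})\leq I_\beta(\mu)/M$, hence for every $M>I_\beta(\mu)$ the sublevel set $A_M:=\{x\in A: f(x)\leq M\}$ satisfies $\mu(A_M)>0$. For every $x\in A_M$ and every $r>0$, bounding $\|x-y\|^{-\beta}\geq r^{-\beta}$ on the ball $B(x,r)$ gives
\begin{align*}
	r^{-\beta}\mu(B(x,r))\leq\int_{B(x,r)}\frac{\ud\mu(y)}{\|x-y\|^\beta}\leq f(x)\leq M,
\end{align*}
so that $\mu(B(x,r))\leq Mr^\beta$ whenever the ball is centred in $A_M$.

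Next I would restrict $\mu$ to $A_M$, setting $\nu:=\mu|_{A_M}$, and transfer the mass bound to arbitrary centres: if $B(x,r)$ meets $A_M$ at some point $x_0$ then $B(x,r)\subset B(x_0,2r)$, whence $\nu(B(x,r))\leq\mu(B(x_0,2r))\leq M2^\beta r^\beta$, while if it misses $A_M$ the bound is trivial. Applying $\nu(B(x,r))\leq M2^\beta r^\beta$ to any countable cover $A_M\subset\bigcup_i B(x_i,r_i)$ with $r_i<\veps$ yields
\begin{align*}
	0<\mu(A_M)=\nu(A_M)\leq\sum_i\nu(B(x_i,r_i))\leq M\sum_i(2r_i)^\beta,
\end{align*}
so $\sum_i(2r_i)^\beta\geq\mu(A_M)/M$ uniformly over covers; taking the infimum over covers and letting $\veps\to0$ gives $m_\beta(A_M)\geq\mu(A_M)/M>0$. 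By Lemma~\ref{lm_Hausdorff_dimension} a strictly positive $\beta$-measure forces $\beta\leq\dim(A_M)$, and by the monotonicity property $\dim(A_M)\leq\dim(A)$; hence $\dim(A)\geq\beta$, and letting $\beta\uparrow\alpha$ yields $\dim(A)\geq\alpha$.

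I expect the main obstacle to be exactly the passage from the integral condition $I_\beta(\mu)<\infty$, which controls only an \emph{average} of the potential, to a \emph{pointwise} control on $\mu(B(x,r))$. This is precisely what the restriction to the sublevel set $A_M$ accomplishes, at the cost of replacing $\mu$ by the subprobability measure $\nu$; compactness of $A$ ensures that $\mu$ is a finite Radon measure, so that these restrictions and the ball enlargement $B(x,r)\subset B(x_0,2r)$ cause no difficulty. The only mildly delicate point is the factor $2^\beta$ incurred when recentring a ball in $A_M$, which is harmless as it is absorbed into the constant.
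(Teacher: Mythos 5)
Your proof is correct. Note that the paper itself contains no proof of this lemma: it is quoted as a black box from Adler's book \cite{Adler1981} (Lemma 8.2.3), so there is no internal argument to compare against. What you have written is the standard Frostman-type proof of the energy criterion --- Markov's inequality applied to the $\beta$-potential $f$, restriction of $\mu$ to a positive-mass sublevel set $A_M$ to convert the averaged bound $I_\beta(\mu)<\infty$ into the pointwise ball estimate $\mu(B(x,r))\leq Mr^\beta$, the mass distribution principle to get $m_\beta(A_M)\geq \mu(A_M)/M>0$, and then Lemma~\ref{lm_Hausdorff_dimension} together with monotonicity to conclude $\dim(A)\geq\beta$ --- which is essentially the argument given in the cited literature, so your route and the source's coincide. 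One trivial caveat: the bound $\|x-y\|^{-\beta}\geq r^{-\beta}$ on $B(x,r)$ needs $\beta>0$, so you should either assume $\alpha>0$ (otherwise $\dim(A)\geq 0\geq\alpha$ is vacuous) and let $\beta\uparrow\alpha$ through positive values, or say so explicitly; this does not affect the substance of the proof.
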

	Another version of this statement is given in \cite[p. 133]{Kahane94}. There, it is established that the supremum value of $\beta$ such that there exists a measure $\mu$ for which $I_\beta(\mu)$ is finite, is nothing but the Hausdorff dimension of $A$.
	
	\section{Upper bound}\label{sc:upper}
	We perform the proof of \Cref{th:main} for $x\in \partial W$ and afterwards we can easily conclude the results for any $x\in \overline{W}$ thanks to the finiteness of the first hitting time and the countable stability of the Hausdorff dimension. The upper bound of the collision times' Hausdorff dimension is given by the following statement.
	\begin{theorem}
		\label{theo_multidimensional_upper_bound}
		For every $0<k<\frac{1}{2}$ and $x\in  \partial{W}
		%_{A_{N-1}}
		$ the inequality
		\begin{align*}
			\dim \big(X^{-1}(\partial W
			%_{A_{N-1}}
			)\big)\leq \frac{1}{2}-k
		\end{align*}
		is $\mbb{P}^x$-almost surely valid. 
	\end{theorem}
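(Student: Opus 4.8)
The plan is to adapt the covering argument of Liu and Xiao \cite{LiuXiao98}, replacing the ball around a single point by the edge set $E^{r}$, which plays the role of the natural ``$r$-neighbourhood'' of the unbounded collision set $\partial W$. Two reductions come first. By the strong Markov property, restarting the process at an arbitrarily small time $\delta>0$ places it at an interior configuration $X_\delta\in W$ almost surely; since $X^{-1}(\partial W)\cap(0,\infty)=\bigcup_{m\geq1}\big(X^{-1}(\partial W)\cap[1/m,\infty)\big)$ and $\dim\{0\}=0$, the countable stability of the Hausdorff dimension lets me replace the boundary starting point $x\in\partial W$ by an interior one and discard a neighbourhood of $t=0$. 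This is precisely the device that removes the technical condition on the initial configuration: an interior start keeps the collision set away from the time origin, so the constants in the density estimate below stay finite and uniform in $t$. A further application of countable stability reduces the claim to a bound on $\dim\big(X^{-1}(\partial W)\cap[0,1]\big)$.

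The analytic core, and the main obstacle, is the slab estimate
\begin{align*}
\mbb{P}^{y}\big(X_t\in E^{r}\big)\leq C\,r^{2k'+1},\qquad y\in W,
\end{align*}
valid for small $r$ and uniformly in $t\in(0,1]$. To prove it I would decompose $E^{r}$ according to which elementary collision occurs, namely a single vanishing gap $z_{i+1}-z_i\leq r$ or, in the $B_N$ case, $z_1\leq r$, so that $\mbb{P}^{y}(X_t\in E^{r})$ is dominated by a finite sum of integrals of the transition density $p_k(t,z\cond{}y)$ over these one-collision slabs. On each slab I would rescale by $\sqrt{t}$ and complete the square in the Gaussian factor, controlling the Dunkl--Bessel function through its two-sided estimate $D_k(z,z')\leq c_N e^{\|z\|\|z'\|}$; what survives is the weight $w^{2k}$, whose only singular factor near the chosen collision is $(z_{i+1}-z_i)^{2k}$ (respectively $z_1^{2k\alpha}$). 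Integrating this factor across the thin slab yields the power $r^{2k+1}$ (respectively $r^{2k\alpha+1}$), while the integral over the remaining, ``spectator'' coordinates converges because $w^{2k}$ grows only polynomially against a Gaussian weight. Retaining the dominant, i.e.\ smallest, exponent over all elementary collisions gives $2\min(k,k\alpha)+1=2k'+1$. The difficulty, relative to \cite{LiuXiao98}, is exactly that $\partial W$ is unbounded, so the spectator integrals must be shown finite and controlled uniformly enough to survive summation over the $2^n$ dyadic cells below.

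With this estimate in hand, I would partition $[0,1]$ into the dyadic intervals $I_{n,j}=[j2^{-n},(j+1)2^{-n}]$ and set $r_n=C\sqrt{n}\,2^{-n/2}$. The key inclusion is
\begin{align*}
\{I_{n,j}\cap X^{-1}(\partial W)\neq\emptyset\}\subset\{X_{j2^{-n}}\in E^{r_n}\}\cup A_{n,j},
\end{align*}
where $A_{n,j}$ denotes the event that the driving Brownian motion oscillates by more than $r_n$ on $I_{n,j}$. The inclusion rests on the fact that the self-interaction drift of a collapsing gap is repulsive, so driving that gap from a value exceeding $r_n$ down to zero within a time span $2^{-n}$ forces the Brownian motion to move by at least $\approx r_n$; a reflection estimate then gives $\mbb{P}(A_{n,j})\leq e^{-cn}$ with $c>\log 2$ once $C$ is large. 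Combining the two displays, the expected number $N_n$ of dyadic intervals meeting $X^{-1}(\partial W)$ obeys
\begin{align*}
\mbb{E}^{y}[N_n]\leq 2^n\big(C\,r_n^{2k'+1}+e^{-cn}\big)\leq C'\,n^{k'+\frac12}\,2^{n(\frac12-k')}.
\end{align*}

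Finally I would convert this into the dimension bound. For any $\gamma>\tfrac12-k'$ the dyadic cover gives $\mbb{E}^{y}\big[N_n\,2^{-n\gamma}\big]\leq C''\,n^{k'+\frac12}\,2^{n(\frac12-k'-\gamma)}$, which is summable in $n$; hence $N_n\,2^{-n\gamma}\to0$ almost surely by Borel--Cantelli, so $m_\gamma\big(X^{-1}(\partial W)\cap[0,1]\big)=0$ and therefore $\dim\big(X^{-1}(\partial W)\cap[0,1]\big)\leq\gamma$ almost surely. Letting $\gamma$ decrease to $\tfrac12-k'$ along a countable sequence yields the bound on $[0,1]$, and undoing the two initial reductions through countable stability extends it to all of $X^{-1}(\partial W)$, which is the assertion of the theorem. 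I expect the transition-density slab estimate of the second paragraph to be the genuine difficulty; the covering and summation steps are routine once it is available.
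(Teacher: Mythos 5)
The weak point of your proposal is not where you think it is. Your slab estimate (second paragraph) is essentially the upper-bound half of the paper's Lemmas~\ref{lm:boundsA} and \ref{lm:boundsB} and is fine in outline, except that it cannot hold ``uniformly in $t\in(0,1]$'': by $1/2$-self-similarity $\mathbb{P}^{y}(X_t\in E^{r})=\mathbb{P}^{y/\sqrt{t}}(X_1\in E^{r/\sqrt{t}})$, so for $y\in E^{r/2}$ this probability tends to $1$ as $t\downarrow 0$; the estimate needs times bounded away from $0$ and starting points in a fixed ball, which is exactly why the paper localizes with the stopping time $T_R$ and works on $[t_1,t_2]\subset(0,\infty)$. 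The genuine gap is the inclusion
\begin{align*}
\{I_{n,j}\cap X^{-1}(\partial W)\neq\emptyset\}\subset\{X_{j2^{-n}}\in E^{r_n}\}\cup A_{n,j},
\end{align*}
which you justify by asserting that ``the self-interaction drift of a collapsing gap is repulsive.'' For $N\geq 3$ this is false. Writing $Z_t=X_{t,i+1}-X_{t,i}$ in the $A_{N-1}$ case, the drift of $Z$ is
\begin{align*}
\frac{2k}{Z_t}\;-\;kZ_t\sum_{j\neq i,i+1}\frac{1}{(X_{t,i+1}-X_{t,j})(X_{t,i}-X_{t,j})},
\end{align*}
and the second (squeezing) term is not dominated by the first: if a third particle sits at distance $\eta\ll Z_t^2$ from $X_{t,i}$, the drift is of order $-k/\eta$, i.e., arbitrarily negative. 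So a gap can in principle be driven toward zero by the interaction itself, and ``collision starting from outside $E^{r_n}$ forces a Brownian oscillation of order $r_n$'' does not follow from a single-gap comparison. One can try to repair this by a cascade argument (squeezing the gap $(i,i+1)$ requires some other gap to become much smaller than $r_n$, which in turn requires Brownian work or yet another small gap, the globally minimal gap having strictly positive drift), but the minimum of the gaps picks up local-time corrections of the unfavorable sign at ties, and in any case this is a substantive argument you have not supplied --- it is precisely the opposite of ``routine.''

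For contrast, the paper avoids all pathwise drift analysis. Its Lemma~\ref{lm:mainbound1} converts fixed-time estimates into interval-hitting estimates by the Liu--Xiao occupation-time ratio: applying the strong Markov property at the first hitting time of $E^{r}$ after $t_1$ gives
\begin{align*}
\mathbb{P}^x[\exists s\in[t_1,t_2\wedge T_R]:X_s\in E^r]\leq\frac{\int_{t_1}^{2t_2-t_1}\mathbb{P}^x[X_s\in E^r]\,\textnormal{d}s}{\inf\limits_{y\in B(0,R)\cap E^r}\int_0^{t_2-t_1}\mathbb{P}^y[X_u\in E^r]\,\textnormal{d}u},
\end{align*}
and the powers of $r$ cancel between numerator and denominator, yielding the bound $C_3(t_2-t_1)^{k'+1/2}$ per interval. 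Note that this requires the \emph{two-sided} bounds of Lemmas~\ref{lm:boundsA} and \ref{lm:boundsB} --- in particular the lower bound $C_1r^{2k'+1}$, which your proposal never establishes or uses. So either you must actually prove the pathwise inclusion for the genuinely interacting system (a new and nontrivial piece of work), or you should replace that step by the hitting-time/occupation ratio, in which case the lower density bound becomes indispensable and your proof collapses onto the paper's.
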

	We prove this statement in a series of lemmas. The  cornerstone of our proof is the probability distribution's asymptotic behavior when a collision is close to occurring, that is, when two particles are closer than a distance $r>0$ to each other. The rest of the proof is mostly analogous to the machinery by Liu and Xiao \cite{LiuXiao98} except for a few technical details.
	%This type of event is represented by an unbounded set for $N>2$, so we must impose a bound on this set in order to examine the situation.
	\begin{lemma}\label{lm:boundsA}
		We fix $k>0$, $R>0$ and $x\in E^r\cap B(0,R)$. Given $r>0$, there exist constants $C_1(k,N)>0$ and $C_2(k,N,R)>0$ such that
		\begin{equation*}
			C_1\min\{1, r^{2k+1}\}\leq P_k(1,E^r
			%_{A_{N-1}}
			\cond{}x)\leq C_2\min\{ 1,r^{2k+1}\}.
		\end{equation*}
	\end{lemma}
	\begin{remark}
		The main difference to the proof for the classical Bessel process case is that $C_2$ depends on $R$.
	\end{remark}
	\begin{proof}[Proof of Lemma~\ref{lm:boundsA}]
		We will make use of the asymptotic relationship by Graczyk and Sawyer \cite[Thm 3.1]{GraczykSawyer} for this proof; there exist constants $0< K_1(N,k)\leq K_2(N,k)$ such that
		\begin{align}
			\frac{K_1t^{-N/2}e^{-\|y-x\|^2/(2t)}w_k(y)}{\prod_{1\leq l<m\leq N}(t/2+(x_m-x_l)(y_m-y_l))^k} \leq p_k(t,y\cond{}x)\leq \frac{K_2t^{-N/2}e^{-\|y-x\|^2/(2t)}w_k(y)}{\prod_{1\leq l<m\leq N}(t/2+(x_m-x_l)(y_m-y_l))^k}.\label{eq:GraczykSawyer}
		\end{align}
		We begin with the lower bound. We choose $i$ as follows,
		\begin{align*}
			i=\argmin_{j=1,\ldots,N-1}(x_{j+1}-x_j)
		\end{align*}
		and define the set
		\begin{align}
			E_i^r:=\{y\in E^r:0\leq y_{i+1}-y_i\leq r%, y_{i+2}-y_i>1\},
			%	\end{align*}
		%	for $i=1,\ldots,N-2$ and
		%	\begin{align*}
			%	    E_{N-1}^r:=\{y\in E^r:0<y_{N}-y_{N-1}<r
			\}\subset E^r.\label{eq:SetEi}
		\end{align}
		Note that this $i$ may not be unique; in that case, we may choose one of the possible candidates arbitrarily, say, the smallest possible such $i$. By \eqref{eq:GraczykSawyer}, 
		\begin{align*}
			P_k(1,E^r\cond{}x)&\geq K_1\int\limits_{E^r}\frac{e^{-\|y-x\|^2/2}w_k(y)}{\prod_{1\leq l<m\leq N}(1/2+(x_m-x_l)(y_m-y_l))^k}\intd y\\
			&\geq K_1\int\limits_{E_i^r}\frac{e^{-\|y-x\|^2/2}w_k(y)}{\prod_{1\leq l<m\leq N}(1/2+(x_m-x_l)(y_m-y_l))^k}\intd y.
		\end{align*}
		Now, we consider the case $0<r<1$ and perform the following variable substitution,
		\begin{align}
			y_j&=x_j+z_j,\quad j\neq i,i+1\notag\\
			y_i&=\frac{z_i-z_{i+1}}{\sqrt{2}}+\frac{x_i+x_{i+1}}{2},\label{eq:maintransformation}\\
			y_{i+1}&=\frac{z_i+z_{i+1}}{\sqrt{2}}+\frac{x_i+x_{i+1}}{2},\notag
		\end{align}
		with its inverse given by
		\begin{align*}
			z_j&=y_j-x_j,\quad j\neq i,i+1\\
			z_i&=\frac{y_i+y_{i+1}}{\sqrt{2}}-\frac{x_i+x_{i+1}}{\sqrt2},\\
			z_{i+1}&=\frac{y_{i+1}-y_i}{\sqrt{2}}\geq 0.
		\end{align*}
		These equations mean that there exists a linear operator $A$ and a fixed vector $b$ such that $y=Az + b$ and $|\det A|=1$. From these equations, it is clear that $z_{i+1}$ represents the distance between the $i$th and $(i+1)$th particles. As we consider how $Az+b\in E_i^r$ imposes restrictions on $z$, we will eliminate the dependence on $x$ by making these restrictions stronger. For $j\neq i-1,i,i+1$ we obtain 
		\begin{align*}
			z_j+x_j\leq z_{j+1}+x_{j+1},
		\end{align*}so 
		\begin{align*}
			z_j\leq z_{j+1}+x_{j+1}-x_j.
		\end{align*}
		Because $x_{j+1}-x_j\geq 0$, a stronger condition is given by
		\begin{align*}
			z_j\leq z_{j+1}.
		\end{align*}
		Because $0\leq y_{i+1}-y_i\leq r\leq 1$, we obtain for $z_{i+1}$
		\begin{align*}
			0\leq z_{i+1}\leq r/\sqrt2\leq 1/\sqrt2.
		\end{align*}
		Next, $z_{i-1}$ satisfies the bound
		\begin{align*}
			z_{i-1}+x_{i-1}\leq \frac{z_i-z_{i+1}}{\sqrt{2}}+\frac{x_i+x_{i+1}}{2},
		\end{align*}
		or
		\begin{align*}
			z_{i-1}\leq \frac{z_i-z_{i+1}}{\sqrt{2}}+\frac{(x_i-x_{i-1})+(x_{i+1}-x_{i-1})}{2},
		\end{align*}
		so using the fact that the $x_i$'s are ordered and that $z_{i+1}\leq 1/\sqrt2$, we impose the stronger condition 
		\begin{align*}
			z_{i-1}\leq \frac{z_i}{\sqrt{2}}-\frac12.
		\end{align*}
		%This condition imposes a lower bound for $z_i$, namely $\sqrt2 z_{i-1}+1/\sqrt2\leq z_i$. 
		We obtain the upper bound for $z_i$ by using $y_{i+1}\leq y_{i+2}$. This condition reads
		\begin{align*}
			\frac{z_i+z_{i+1}}{\sqrt{2}}+\frac{x_i+x_{i+1}}{2} \leq z_{i+2}+x_{i+2}\ \Leftrightarrow\  \frac{z_{i}}{\sqrt2}+\frac{z_{i+1}}{\sqrt{2}}\leq  z_{i+2}+\frac{(x_{i+2}-x_i)+(x_{i+2}-x_{i+1})}{2},
		\end{align*}
		so we impose the stronger condition %on $z_{i+2}$
		\begin{align*}
			\frac{z_{i}}{\sqrt2}+\frac12\leq z_{i+2},
		\end{align*}
		which translates into $z_i\leq \sqrt2 z_{i+2}-1/\sqrt2$ for $z_i$. Because all of these conditions are stronger than those imposed on $y$, it follows that for every $z$ that satisfies
		\begin{align}
			\begin{split}
				z_j&\leq z_{j+1},\ j\neq i-1,i,i+1,\\
				z_{i-1}&\leq \frac{z_i}{\sqrt2}-\frac12,\\
				\frac{z_i}{\sqrt2}+\frac12&\leq z_{i+2},\\
				0&\leq z_{i+1}\leq r/\sqrt2,
			\end{split}
			\label{eq:boundsLambda}
		\end{align}
		$Az+b$ must be an element of $E_i^r$. 
		If we further restrict $z$ to satisfy
		\begin{align*}
			z&\in \Lambda_{i,N}\cap\{0\leq z_{i+1}\leq r/\sqrt2\},\\
			\Lambda_{i,N}:=\Big\{z\in\mathbb{R}^N:&\sum_{\substack{j=1\\j\neq i+1}}^Nz_j^2\leq (N+1)^3,\ z_j\leq z_{j+1}\ (j\neq i-1,i,i+1),
			z_{i-1}+\frac12\leq  \frac{z_i}{\sqrt{2}}\leq z_{i+2}-\frac12\Big\},
		\end{align*}
		we see that $z$ belongs to a compact set which guarantees that $Az+b\in E_i^r$. 
		Obviously, we can see  
		\begin{align*}
			\underbrace{\Big[0,\frac12\Big]\times \dots \times \Big[\frac{i-2}{2},\frac{i-1}{2}\Big]}_{\ni(z_1,\dots,z_{i-1})}&\times \Big[\frac{i}{\sqrt2 },\frac{i+1}{\sqrt2}\Big]\times \Big[0,\frac{r}{\sqrt2}\Big]
			\times \Big[\frac{i+2}{2},i+2\Big]\\
			&\times\underbrace{\Big[i+2,i+3\Big]\times \dots \times \Big[N-1,N\Big]}_{\ni (z_{i+3},\dots, z_N)}\subset \Lambda_{i,N}
		\end{align*}
		since every vector within the set on the left-hand side fulfills
		\begin{align*}
			\sum_{\substack{j=1\\j\neq i+1}}^Nz_j^2&\leq \sum_{j=1}^{i-1}\frac{j^2}{4} + \frac{(i+1)^2}{2}+\sum_{j=i+2}^N j^2\\
			&\leq \sum_{j=1}^N j^2= \frac{N(N+1)(2N+1)}{6}\\
			&\leq (N+1)^3.
		\end{align*}
		We define the projection
		\begin{align*}
			p_{i+1}(\xi_1,\ldots,\xi_i,\xi_{i+1},\xi_{i+2},\ldots,\xi_N):=(\xi_1,\ldots,\xi_i,\xi_{i+2},\ldots,\xi_N)
		\end{align*}
		and observe that the Lebesgue measure of the projection of $\Lambda_{i,N}$ is bounded below by
		\begin{align*}
			\lambda^{N-1}[p_{i+1}(\Lambda_{i,N})]\geq \frac{1}{2^N}.
		\end{align*}
		%Furthermore, given the conditions in $\Lambda_{i,N}$ we see that
		%\begin{align*}
		%   \sum_{\substack{j=1\\j\neq i+1}}^N z_j^2\geq \frac14(N-2).
		%\end{align*}
		%This is because this sum is minimized when $z_i=0$, $z_1=\cdots=z_{i-1}=-1/2$, and $z_{i+2}=\cdots=z_N=1/2$. Therefore, imposing the condition $\sum_{j\neq i+1} z_j^2\leq 4N$ guarantees that $\Lambda_{i,N}\cap \{0\leq z_{i+1}\leq r/\sqrt2\}$ is a compact set of positive Lebesgue measure. 
		Now, for every such $z$ we can derive $x$-independent lower bounds for the integrand as follows. First, the argument in the exponential can be bounded by
		\begin{align*}
			\|x-y\|^2&=\sum_{\substack{j=1\\j\neq i,i+1}}^N(y_j-x_j)^2+(y_i-x_i)^2+(y_{i+1}-x_{i+1})^2\\
			&=\sum_{\substack{j=1\\j\neq i,i+1}}^Nz_j^2+\Big(\frac{z_i-z_{i+1}}{\sqrt{2}}+\frac{x_i+x_{i+1}}{2}-x_i\Big)^2+\Big(\frac{z_i+z_{i+1}}{\sqrt{2}}+\frac{x_i+x_{i+1}}{2}-x_{i+1}\Big)^2\\
			&=\sum_{\substack{j=1\\j\neq i,i+1}}^Nz_j^2+\Big(\frac{z_i-z_{i+1}}{\sqrt{2}}+\frac{x_{i+1}-x_i}{2}\Big)^2+\Big(\frac{z_i+z_{i+1}}{\sqrt{2}}-\frac{x_{i+1}-x_i}{2}\Big)^2\\
			&=\sum_{\substack{j=1\\j\neq i+1}}^Nz_j^2+z_{i+1}^2+\frac{(x_{i+1}-x_i)^2}{2}+(x_{i+1}-x_i)\frac{z_i-z_{i+1}}{\sqrt{2}}-(x_{i+1}-x_i)\frac{z_i+z_{i+1}}{\sqrt{2}}\\
			&=\sum_{\substack{j=1\\j\neq i+1}}^Nz_j^2+z_{i+1}^2+\frac{(x_{i+1}-x_i)^2}{2}-\sqrt2 (x_{i+1}-x_i)z_{i+1}\leq (N+1)^3+z_{i+1}^2+\frac{(x_{i+1}-x_i)^2}{2}\\
			&\leq (N+1)^3 + \frac{r^2}{2}+ \frac{(x_{i+1}-x_i)^2}{2}\leq (N+1)^3 + \frac{r^2}{2}+ \frac{r^2}{2}\leq (N+1)^3+1.
		\end{align*}
		Next, we must find bounds for the terms in the product
		\begin{align*}
			\prod_{1\leq l<m\leq N}\frac{(y_m-y_l)^2}{1/2+(x_m-x_l)(y_m-y_l)}
		\end{align*}
		to the power $k$. Now, we consider six separate cases. The simplest case is that where $l=i$, $m=i+1$,
		\begin{align*}
			\frac{(y_{i+1}-y_i)^2}{1/2+(x_{i+1}-x_i)(y_{i+1}-y_i)}=\frac{(\sqrt2 z_{i+1})^2}{1/2+\sqrt2 z_{i+1}(x_{i+1}-x_i)}\geq \frac43 z_{i+1}^2.
		\end{align*}
		Here, we have used the fact that both $\sqrt2 z_{i+1}$ and $x_{i+1}-x_i$ are less than or equal to $r$, which in turn is bounded above by 1. Next is the case $l,m\neq i,i+1$, where we get
		\begin{align*}
			\frac{(y_m-y_l)^2}{1/2+(y_m-y_l)(x_m-x_l)}=\frac{(z_m-z_l+x_m-x_l)^2}{1/2+(z_m-z_l+x_m-x_l)(x_m-x_l)}\geq \frac{(z_m-z_l+x_m-x_l)^2}{1/2+(z_m-z_l+x_m-x_l)^2}.
		\end{align*}
		The last member uses the fact that $z\in \Lambda_{i,N}$ and therefore $z_m-z_l\geq 0$. Next, observing that the function $\xi\mapsto\xi^2/(1/2+\xi^2)$ is increasing in $\xi\geq0$, we conclude that
		\begin{align*}
			\frac{(y_m-y_l)^2}{1/2+(y_m-y_l)(x_m-x_l)}\geq \frac{(z_m-z_l)^2}{1/2+(z_m-z_l)^2}
		\end{align*}
		because $x\in \overline{W}$ implies that $x_m-x_l\geq 0$. The following case is $l=i$, $m>i+1$, where we have
		\begin{align*}
			\frac{(y_m-y_i)^2}{1/2+(y_m-y_i)(x_m-x_i)}&=\frac{(z_m-(z_i-z_{i+1})/\sqrt2 +x_m-(x_i+x_{i+1})/2)^2}{1/2+(z_m-(z_i-z_{i+1})/\sqrt2+x_m-(x_i+x_{i+1})/2)(x_m-x_i)}\\
			&=\frac{(z_m-(z_i-z_{i+1})/\sqrt2 -(x_{i+1}-x_i)/2+x_m-x_i)^2}{1/2+(z_m-(z_i-z_{i+1})/\sqrt2 -(x_{i+1}-x_i)/2+x_m-x_i)(x_m-x_i)}\\
			&\geq\frac{(z_m-(z_i-z_{i+1})/\sqrt2 -(x_{i+1}-x_i)/2+x_m-x_i)^2}{1/2+(z_m-(z_i-z_{i+1})/\sqrt2 -(x_{i+1}-x_i)/2+x_m-x_i)^2}\\
			&\geq\frac{(z_m-z_i/\sqrt2-1/2)^2}{1/2+(z_m-z_i/\sqrt2 -1/2)^2}.
		\end{align*}
		Here, we have used the fact that $2z_m-\sqrt2(z_i-z_{i+1})-(x_{i+1}-x_i)\geq 0$ because of $z_m\geq z_{i+2}\geq z_i/\sqrt2 +1/2$ and $x_{i+1}-x_i\leq 1$, as well as the non-negativity of $z_{i+1}$ and the increasing nature of the function $\xi\mapsto\xi^2/(1/2+\xi^2)$ for $\xi\geq 0$. Similarly, when $l=i+1$ and $m>i+1$ we have
		\begin{align*}
			\frac{(y_m-y_{i+1})^2}{1/2+(y_m-y_{i+1})(x_m-x_{i+1})}&=\frac{(z_m-(z_i+z_{i+1})/\sqrt2 +x_m-(x_i+x_{i+1})/2)^2}{1/2+(z_m-(z_i+z_{i+1})/\sqrt2+x_m-(x_i+x_{i+1})/2)(x_m-x_{i+1})}\\
			&=\frac{(z_m-(z_i+z_{i+1})/\sqrt2 +(x_{i+1}-x_i)/2+x_m-x_{i+1})^2}{1/2+(z_m-(z_i+z_{i+1})/\sqrt2 +(x_{i+1}-x_i)/2+x_m-x_{i+1})(x_m-x_{i+1})}\\
			&\geq\frac{(z_m-(z_i+z_{i+1})/\sqrt2 +(x_{i+1}-x_i)/2+x_m-x_{i+1})^2}{1/2+(z_m-(z_i+z_{i+1})/\sqrt2 +(x_{i+1}-x_i)/2+x_m-x_{i+1})^2}\\
			&\geq\frac{(z_m-z_i/\sqrt2-1/2)^2}{1/2+(z_m-z_i/\sqrt2-1/2)^2}.
		\end{align*}
		The last two cases are those for which $l<i$ and $m=i$ or $i+1$. For the first one we obtain
		\begin{align*}
			\frac{(y_i-y_l)^2}{1/2+(y_i-y_l)(x_i-x_l)}&=\frac{((z_i-z_{i+1})/\sqrt2-z_l+(x_i+x_{i+1})/2-x_l)^2}{1/2+((z_i-z_{i+1})/\sqrt2-z_l+(x_i+x_{i+1})/2-x_l)(x_i-x_l)}\\
			&=\frac{((z_i-z_{i+1})/\sqrt2-z_l+(x_{i+1}-x_i)/2+x_i-x_l)^2}{1/2+((z_i-z_{i+1})/\sqrt2-z_l+(x_{i+1}-x_i)/2+x_i-x_l)(x_i-x_l)}\\
			&\geq\frac{((z_i-z_{i+1})/\sqrt2-z_l+(x_{i+1}-x_i)/2+x_i-x_l)^2}{1/2+((z_i-z_{i+1})/\sqrt2-z_l+(x_{i+1}-x_i)/2+x_i-x_l)^2}\\
			&\geq\frac{(z_i/\sqrt2-z_l-1/2)^2}{1/2+(z_i/\sqrt2-z_l-1/2)^2}.
		\end{align*}
		For this we used a similar argument to those used for the previous terms, as well as the inequality $z_l\leq z_{i-1}\leq z_i/\sqrt2-1/2$. In the same way, we get
		\begin{align*}
			\frac{(y_{i+1}-y_l)^2}{1/2+(y_{i+1}-y_l)(x_{i+1}-x_l)}&=\frac{((z_i+z_{i+1})/\sqrt2-z_l+(x_i+x_{i+1})/2-x_l)^2}{1/2+((z_i+z_{i+1})/\sqrt2-z_l+(x_i+x_{i+1})/2-x_l)(x_{i+1}-x_l)}\\
			&=\frac{((z_i+z_{i+1})/\sqrt2-z_l-(x_{i+1}-x_i)/2+x_{i+1}-x_l)^2}{1/2+((z_i+z_{i+1})/\sqrt2-z_l-(x_{i+1}-x_i)/2+x_{i+1}-x_l)(x_{i+1}-x_l)}\\
			&\geq\frac{((z_i+z_{i+1})/\sqrt2-z_l-(x_{i+1}-x_i)/2+x_{i+1}-x_l)^2}{1/2+((z_i+z_{i+1})/\sqrt2-z_l-(x_{i+1}-x_i)/2+x_{i+1}-x_l)^2}\\
			&\geq\frac{(z_i/\sqrt2-z_l-1/2)^2}{1/2+(z_i/\sqrt2-z_l-1/2)^2}
		\end{align*}
		for $m=i+1$ and $l<i$.
		For a more compact notation, we define the vector
		\begin{align}
			z^*_{i+1}:=p_{i+1}(z)=(z_1\ldots,z_i,z_{i+2},\ldots,z_N).\label{eq:projection}
		\end{align}
		%to specify the projection
		%\begin{align*}
		%    \text{proj}_i\Lambda_{i,N}:=\Big\{y\in\mbb R^{N-1}\cond{}&\sum_{j=1}^{N-1}y_j^2\leq 4N,\ y_j\leq y_{j+1}\ (j\neq i-1,i),y_{i-1}+\frac12\leq  \frac{y_i}{\sqrt{2}}\leq y_{i+1}-\frac12\Big\}.
		%\end{align*}
		Using all of the previous bounds, we can write
		\begin{align*}
			\mathbb{P}^x&(X_1\in E^r)\\
			&\geq K_1(N,k)\int\limits_{p_{i+1}(\Lambda_{i,N})}e^{-((N+1)^3+1)/2}\Pi_{N,k,i}^{(1)}(z^*_{i+1})\Pi_{N,k,i}^{(2)}(z^*_{i+1})\Pi_{N,k,i}^{(3)}(z^*_{i+1})\intd z^*_{i+1}\int\limits_0^{r/\sqrt2}\frac{4^k }{3^k}z_{i+1}^{2k}\intd z_{i+1}\\
			&=K_1(N,k)e^{-((N+1)^3+1)/2}\frac{2^k}{3^k\sqrt2}r^{2k+1}\int\limits_{p_{i+1}(\Lambda_{i,N})}\Pi_{N,k,i}^{(1)}(z^*_{i+1})\Pi_{N,k,i}^{(2)}(z^*_{i+1})\Pi_{N,k,i}^{(3)}(z^*_{i+1})\intd z^*_{i+1},
		\end{align*}
		where the products $\Pi_{N,k,i}^{(1)}(z^*_{i+1})$, $\Pi_{N,k,i}^{(2)}(z^*_{i+1})$, and $\Pi_{N,k,i}^{(3)}(z^*_{i+1})$ are given by
		\begin{align*}
			\Pi_{N,k,i}^{(1)}(z^*_{i+1})&:=\prod_{\substack{1\leq l<m\leq N\\l,m\neq i,i+1}}\left[\frac{(z_m-z_l)^2}{1/2+(z_m-z_l)^2}\right]^k,\\
			\Pi_{N,k,i}^{(2)}(z^*_{i+1})&:=\ \ \,\prod_{1\leq l<i}\left[\frac{(z_i/\sqrt2-z_l-1/2)^2}{1/2+(z_i/\sqrt2-z_l-1/2)^2}\right]^{2k},\\
			\Pi_{N,k,i}^{(3)}(z^*_{i+1})&:=\prod_{i+1<m\leq N}\left[\frac{(z_m-z_i/\sqrt2-1/2)^2}{1/2+(z_m-z_i/\sqrt2 -1/2)^2}\right]^{2k}
		\end{align*}
		respectively. Therefore,
		\begin{align*}
			\mathbb{P}^x(X_1&\in E^r)\geq C_1(N,k)r^{2k+1}.
		\end{align*}
		For $r>1$ we start from the same point:
		\begin{align*}
			\mbb P^x(X_1\in E^r) &\geq K_1\int\limits_{E^r}\frac{e^{-\|x-y\|^2/2}w_k(y)}{\prod_{1\leq l<m\leq N}(1/2+(x_m-x_l)(y_m-y_l))^k}\intd y.
		\end{align*}
		If $x\in E^1$, then we use the same arguments as in the the case $r\leq 1$ by making use of
		\begin{align*}
			\mbb P^x(X_1\in E^r) &\geq\mbb P^x(X_1\in E^1).
		\end{align*}
		By this observation, we can use the calculations for the case $r\leq 1$ specialized to $r=1$ and the result is immediate. Otherwise, $x\in E^r\backslash E^1$, which means that after choosing
		\begin{align*}
			i=\argmin_{j=1,\ldots,N-1}(x_{j+1}-x_j),
		\end{align*}
		we have
		\begin{align*}
			x_{i+1}-x_i\geq 1.
		\end{align*}
		%Note that \textcolor{red}{Why? We never need it in the calculations?} \textcolor{blue}{ I think I made use of this equation implicitly later on. Will check and write a comment.}
		%\textcolor{red}{Okay. I have not used it in the calculations and want to discuss with you to delete this line.}
		%\begin{align*}
		%\text{dist}(x,\partial W)=\frac{\langle \\x,e_{i+1}-e_i\rangle}{\sqrt2}=\frac{x_{i+1}-x_i}{\sqrt2}\geq \frac{1}{\sqrt2}.
		%\end{align*}
		Consider the transformation $y=z+x$ with integration set $z\in B(0,1/2)\cap \{z:z_{i+1}-z_i\leq 0\}$. 
		%Given that the minimum distance to the boundary of the Weyl chamber is bounded below by $1/\sqrt2$, ]
		We must ensure that any such $z$ implies that $y=x+z\in E_i^r$. %\textcolor{blue}{This is where I used the lower bound on the distance to $\partial W$. We argued without this that every such $z$ is in the Weyl chamber.} \textcolor{red}{But you do not need to talk about this distance. You just use $x_{i+1}-x_i \geq 1$ and $x_{j+1}-x_j\geq \min x_{k+1}-x_k=x_{i+1}-x_i\geq 1$. This has nothing to do with the distance to the border but the fact, that $y\in E^r\backslash E^1$} 
		Because $1\leq x_{i+1}-x_i\leq r$ and $-1/\sqrt2\leq z_{j+1}-z_j$ for every $j\in\{1,\ldots,N-1\}$, we obtain
		\begin{align*}
			0<-\frac{1}{\sqrt2}+1\leq-\frac{1}{\sqrt2}+x_{i+1}-x_i\leq z_{i+1}-z_i+x_{i+1}-x_i=y_{i+1}-y_i\leq x_{i+1}-x_i\leq r.
		\end{align*}
		Here, we used the additional requirement that $z_{i+1}-z_i\leq 0$. For the rest of the pairs ($j\neq i$), we get a similar relationship, albeit without an upper bound,
		\begin{align*}
			0<-\frac{1}{\sqrt2}+1\leq-\frac{1}{\sqrt2}+x_{j+1}-x_j\leq z_{j+1}-z_j+x_{j+1}-x_j=y_{j+1}-y_j.
		\end{align*}
		Therefore, the $y$'s are still ordered and in $E_i^r$. 
		%\textcolor{red}{This last two calculations argues why $y\in \overline{W}$ and there we do not need this distance. }%In particular, the closest point of this new integration set to the boundary of the Weyl chamber is given by
		%\begin{align*}
		%z_{i+1}-z_i+x_{i+1}-x_i\geq x_{i+1}-x_i-\frac{1}{\sqrt2}>0.
		%\end{align*}
		Consequently, we obtain
		\begin{align*}
			\mbb P^x(X_1\in E^r) &\geq K_1\int\limits_{B(0,1/2)\cap\{z:z_{i+1}-z_i\leq0\}}\frac{e^{-\|z\|^2/2}w_k(z+x)}{\prod_{1\leq l<m\leq N}(1/2+(x_m-x_l)(z_m-z_l+x_m-x_l))^k}\intd z\\
			&\geq K_1\int\limits_{B(0,1/2)\cap\{z:z_{i+1}-z_i\leq0\}}\frac{e^{-1/8}w_k(z+x)}{\prod_{1\leq l<m\leq N}(1/2+(x_m-x_l)(z_m-z_l+x_m-x_l))^k}\intd z.
		\end{align*}
		Let us look at all terms in the product:
		\begin{align*}
			\frac{(z_m-z_l+x_m-x_l)^2}{1/2+(z_m-z_l+x_m-x_l)(x_m-x_l)}&\geq
			\frac{(x_m-x_l-1/\sqrt2)^2}{1/2+(1/\sqrt{2}+x_m-x_l)(x_m-x_l)}\\
			&\geq
			\frac{(1-1/\sqrt2)^2}{1/2+(1/\sqrt{2}+1)\cdot 1}.
		\end{align*}
		In the first line, we used $-\frac{1}{\sqrt2}\leq z_m-z_l \leq \frac{1}{\sqrt2}$ and then the increasing nature of 
		$$\xi\mapsto \frac{(\xi-1/\sqrt2)^2}{1/2+(\sqrt2+\xi)\xi}$$
		for $\xi\geq 1$. Therefore, we can write
		\begin{align*}
			\mbb P^x(X_1\in E^r) &\geq K_1\left(\frac{(1-1/\sqrt2)^2}{1/2+1/\sqrt{2}+1}\right)^{kN(N-1)/2}e^{-1/8}\lambda^N\big(B(0,1/2)\cap\{z:z_{i+1}-z_i\leq0\}\big)\\
			&=K_1\left(\frac{(1-1/\sqrt2)^2}{1/2+1/\sqrt{2}+1}\right)^{kN(N-1)/2}\frac{\pi^{N/2}e^{-1/8}}{\Gamma(N/2+1)2^{N+1}}.  
		\end{align*}
		The Lebesgue-measure is independent of $i$ and positive which results in the desired lower bound. With this, we have proved the first part of the statement.
		
		For the second part, namely, the upper bound, we start by noting that if $r> 1$, we can write
		\begin{align*}
			P^x(X_1\in E^r)\leq 1%P^x(X_1\in W)=1,
		\end{align*}
		so it suffices to consider the case $0<r\leq 1$. Now, we consider the bound
		\begin{align*}
			\mbb P^x(X_1\in E^r)&\leq K_2\int\limits_{E^r}\frac{e^{-\|x-y\|^2/2}w_k(y)}{\prod_{1\leq l<m\leq N}(1/2+(x_m-x_l)(y_m-y_l))^k}\intd y\\
			&\leq K_2\sum_{i=1}^{N-1}\int\limits_{E_i^r}\frac{e^{-\|x-y\|^2/2}w_k(y)}{\prod_{1\leq l<m\leq N}(1/2+(x_m-x_l)(y_m-y_l))^k}\intd y.
		\end{align*}
		Here, $E_i^r$ is as defined previously, while noting that
		\begin{align}
			E^r=\bigcup_{i=1}^{N-1}E_i^r=\bigcup_{i=1}^{N-1}\{y\in \overline{W}:0\leq y_{i+1}-y_i\leq r\}.\label{eq:UnionEir}
		\end{align}
		Looking at each integral, we can use the variable transformation given in \eqref{eq:maintransformation}. 
		%\begin{align*}
		%y_j&=x_j+z_j,\quad j\neq i,i+1\\
		%y_i&=\frac{z_i-z_{i+1}}{\sqrt{2}}+\frac{x_i+x_{i+1}}{2},\\
		%y_{i+1}&=\frac{z_i+z_{i+1}}{\sqrt{2}}+\frac{x_i+x_{i+1}}{2}.
		%\end{align*}
		Here, we only require that $z$ be such that $y$ covers at least $E_i^r$ while being in $\overline{W}$, and that $0\leq z_{i+1}\leq r/\sqrt2$. Then, we look at each of the parts of the probability density again, starting from the argument of the exponential function. We write 
		\begin{align*}
			\|x-y\|^2
			&=\|z\|^2+\frac{(x_{i+1}-x_i)^2}{2}+(x_{i+1}-x_i)\frac{z_i-z_{i+1}}{\sqrt{2}}-(x_{i+1}-x_i)\frac{z_i+z_{i+1}}{\sqrt{2}}\\
			&=\|z\|^2+\frac{(x_{i+1}-x_i)^2}{2}-\sqrt2 (x_{i+1}-x_i)z_{i+1}\\
			&\geq \sum_{\substack{j=1\\j\neq i+1}}^Nz_j^2+\frac{(x_{i+1}-x_i)^2}{2}-(x_{i+1}-x_i)\geq \sum_{\substack{j=1\\j\neq i+1}}^Nz_j^2-\frac{1}{2}
		\end{align*}
		since $0\leq z_{i+1}\leq r/\sqrt2\leq 1/\sqrt2$ and the minimum value of the function $\xi\mapsto\xi^2/2-\xi$ is $-1/2$. Note that the first line in this calculation is the same as the one we performed when we used the transformation to $z$ the first time. The rest of the terms come from the product 
		\begin{align*}
			\prod_{1\leq l<m\leq N}\left(\frac{(y_m-y_l)^2}{1/2+(x_m-x_l)(y_m-y_l)}\right)^k\leq 2^{kN(N-1)/2}\prod_{1\leq l<m\leq N}(y_m-y_l)^{2k}.
		\end{align*}
		We have taken away the term $(x_m-x_l)(y_m-y_l)\geq 0$ in the denominator. The simplest term is $l=i$, $m=i+1$,
		\begin{align*}
			(y_{i+1}-y_i)^2=2z_{i+1}^2.
		\end{align*}
		In the following, we often use the bound 
		\begin{align}
			\label{eq:cond_x_in_the_ball}
			0\leq x_m-x_l\leq \sqrt2 R
		\end{align}
		for every $1\leq l<m\leq N$ and $0\leq z_{i+1}/\sqrt2\leq r/2\leq 1/2$.
		We simply bound the terms $l,m\neq i,i+1$ 
		\begin{align*}
			y_m-y_l=z_m-z_l+x_m-x_l\leq z_m-z_l+\sqrt2 R.
		\end{align*}
		For $l=i$ and $m>i+1$, we obtain
		\begin{align*}
			0\leq y_m-y_i&=z_m+x_m-\frac{z_i-z_{i+1}}{\sqrt2}-\frac{x_{i+1}+x_i}{2}\\
			&=z_m-\frac{z_i}{\sqrt2}+\frac{z_{i+1}}{\sqrt2}+\frac{x_{m}-x_i}{2}+\frac{x_m-x_{i+1}}{2}\\
			&\leq z_m-\frac{z_i}{\sqrt2}+\frac12+\sqrt2 R.
		\end{align*}
		If $l=i+1$ and $m>i+1$, then
		\begin{align*}
			0\leq y_m-y_{i+1}&=z_m+x_m-\frac{z_i+z_{i+1}}{\sqrt2}-\frac{x_{i+1}+x_i}{2}\\
			&=z_m-\frac{z_i}{\sqrt2}-\frac{z_{i+1}}{\sqrt2}+\frac{x_{m}-x_i}{2}+\frac{x_m-x_{i+1}}{2}\\
			&\leq z_m-\frac{z_i}{\sqrt2}+\sqrt2R.
		\end{align*}
		When $m=i+1$ and $l<i$, we obtain
		\begin{align*}
			0\leq y_{i+1}-y_l&=\frac{z_i+z_{i+1}}{\sqrt2}+\frac{x_{i+1}+x_i}{2}-z_l-x_l\\
			&=\frac{z_i}{\sqrt2}-z_l+\frac{z_{i+1}}{\sqrt2}+\frac{x_i-x_l}{2}+\frac{x_{i+1}-x_l}{2}\\
			&\leq \frac{z_i}{\sqrt2}-z_l+\frac12+\sqrt2 R.
		\end{align*}
		The last case we must examine is that where $m=i$ and $l<i$,
		\begin{align*}
			0\leq y_{i}-y_l&=\frac{z_i-z_{i+1}}{\sqrt2}+\frac{x_{i+1}+x_i}{2}-z_l-x_l\\
			&=\frac{z_i}{\sqrt2}-z_l-\frac{z_{i+1}}{\sqrt2}+\frac{x_i-x_l}{2}+\frac{x_{i+1}-x_l}{2}\\
			&\leq \frac{z_i}{\sqrt2}-z_l+\sqrt2 R.
		\end{align*}
		Let us recall the integration space. Because $y\in\overline{W}$, the components of $z$ must satisfy
		\begin{align*}
			z_{j}+x_{j}&\leq  z_{j+1}+x_{j+1},\ \qquad j\not\in\{i-1,i,i+1\} \\
			z_{i-1}+x_{i-1}&\leq \frac{z_i-z_{i+1}}{\sqrt2}+\frac{x_{i+1}+x_i}{2},\\
			\frac{z_i+z_{i+1}}{\sqrt2}+\frac{x_{i+1}+x_i}{2}&\leq  z_{i+2}+x_{i+2},\\
			0&\leq \sqrt2 z_{i+1}\leq r.
		\end{align*}
		%\begin{align*}
		%   0\leq & z_{j+1}+x_{j+1}-z_{j}-x_{j},\ 1\leq j\leq N-1,\ j\neq i-1,i,i+1, \\
		%  0\leq & z_{i+2}+x_{i+2}-\frac{z_i+z_{i+1}}{\sqrt2}-\frac{x_{i+1}+x_i}{2},\\
		% 0\leq &\sqrt2 z_{i+1}\leq r,\\
		%0\leq &\frac{z_i-z_{i+1}}{\sqrt2}+\frac{x_{i+1}+x_i}{2}-z_{i-1}-x_{i-1}.
		%\end{align*}
		We can maximize the integration space by using the fact that $z_{i+1}\geq 0$
		\begin{align*}
			z_{j}+x_{j}&\leq  z_{j+1}+x_{j+1},\ \qquad j\not\in\{i-1,i,i+1\} \\
			z_{i-1}+x_{i-1}&\leq \frac{z_i}{\sqrt2}+\frac{x_{i+1}+x_i}{2},\\
			\frac{z_i}{\sqrt2}+\frac{x_{i+1}+x_i}{2}&\leq  z_{i+2}+x_{i+2},\\
			0&\leq \sqrt2 z_{i+1}\leq r
		\end{align*}
		and then using \eqref{eq:cond_x_in_the_ball}.
		%\begin{align*}
		%   0\leq & z_{j+1}+x_{j+1}-z_{j}-x_{j},\ 1\leq j\leq N-1,\ j\neq i-1,i,i+1, \\
		%  0\leq & z_{i+2}+x_{i+2}-\frac{z_i}{\sqrt2}-\frac{x_{i+1}+x_i}{2},\\
		% 0\leq &\sqrt2 z_{i+1}\leq r,\\
		%0\leq &\frac{z_i}{\sqrt2}+\frac{x_{i+1}+x_i}{2}-z_{i-1}-x_{i-1}.
		%\end{align*}
		Using these inequalities, we define the set
		%\begin{align*}
		%	\mathcal{M}_{i,x,N}:=\Big\{z\in\mathbb{R}^N:\ & z_j\leq z_{j+1}+x_{j+1}\text{ for } j\not\in\{i-1,i,i+1\},\\ &z_{i-1}+x_{i-1}\leq \frac{z_i}{\sqrt2}+\frac{x_{i+1}+x_i}{2}\leq z_{i+2}+x_{i+2} \Big\}
		%\end{align*}
		\begin{align*}
			\mathcal{M}_{i,R,N}:=\bigg\{z\in\mathbb{R}^N:\ & z_j\leq z_{j+1}+\sqrt2 R \text{ for } j\not\in\{i-1,i,i+1\},z_{i-1}-\sqrt2R\leq \frac{z_i}{\sqrt2}\leq z_{i+2}+\sqrt2R \bigg\}
		\end{align*}
		%\begin{align*}
		%	\mathcal{M}_{i,x}:=\Big\{z\in\mathbb{R}^N:&z_{j+1}+x_{j+1}\geq z_j+x_j\text{ for } j=1,\ldots,i-2,i+2,\ldots,N-1,\\ &z_{i+2}+x_{i+2}\geq\frac{z_i}{\sqrt2}+\frac{x_{i+1}+x_i}{2}\geq z_{i-1}+x_{i-1}\Big\}
		%\end{align*}
		in order to obtain the integration set
		\begin{align*}
			\mathcal{M}_{i,R,N}\cap \{0\leq z_{i+1}\leq r/\sqrt2\}.%,
		\end{align*}
		With these definitions, inequalities, and the projection \eqref{eq:projection}, we obtain
		\begin{align*}
			\mathbb{P}^x\big(X_1&\in E^r\big)\leq K_2\sum_{i=1}^{N-1}\int\limits_{E_{i}^{r}}\frac{e^{-\frac{\|x-y\|^2}{2}}w_k(y)}{\prod\limits_{1\leq l<m\leq N}\big(\frac12+(x_m-x_l)(y_m-y_l)\big)^k}\intd y\\
			&\leq K_2\sum_{i=1}^{N-1}\int\limits_{E_{i}^{r}}2^{\frac{kN(N-1)}{2}}e^{-\frac{\|x-y\|^2}{2}}w_k(y)\intd y\\
			&\leq K_2\sum_{i=1}^{N-1}\int\limits_{p_{i+1}(\mathcal{M}_{i,R,N})}\int\limits_0^{ \frac{r}{\sqrt2}}2^{\frac{kN(N-1)}{2}}e^{-\frac{\|z_{i+1}^\ast\|^2}{2}+\frac12}\\
			&\hspace*{3.5cm}2^kz_{i+1}^{2k}\Pi'^{(1)}_{N,k,R,i}(z_{i+1}^\ast)\Pi'^{(2)}_{N,k,R,i}(z_{i+1}^\ast)\Pi'^{(3)}_{N,k,i,R}(z_{i+1}^\ast)\intd z_{i+1}\intd z_{i+1}^\ast\\
			&=K_2\frac{2^{\frac{kN(N-1)-1}{2}}r^{2k+1}}{2k+1}\sum\limits_{i=1}^{N-1}\int\limits_{p_{i+1}(\mathcal{M}_{i,R,N})}e^{-\frac{\|z_{i+1}^\ast\|^2}{2}+\frac12}\\
			&\hspace*{4.9cm}\Pi'^{(1)}_{N,k,R,i}(z_{i+1}^\ast)\Pi'^{(2)}_{N,k,R,i}(z_{i+1}^\ast,x)\Pi'^{(3)}_{N,k,R,i}(z_{i+1}^\ast)\intd z_{i+1}^\ast\\
			&=:C_2(N,k,R)r^{2k+1}
		\end{align*}
		with
		\begin{align*}
			\Pi'^{(1)}_{N,k,R,i}(z_{i+1}^\ast)&:=\prod_{\substack{1\leq l<m\leq N\\l,m\neq i,i+1}}(z_m-z_l+\sqrt2 R)^{2k},\\
			\Pi'^{(2)}_{N,k,R,i}(z_{i+1}^\ast)&:=\prod_{i+1<m\leq N}\left[\left(z_m-\frac{z_i}{\sqrt2}+\frac{1}{2}+\sqrt2 R\right)\left(z_m-\frac{z_i}{\sqrt2}+\sqrt2 R\right)\right]^{2k},\\
			\Pi'^{(3)}_{N,k,R,i}(z_{i+1}^\ast)&:=\prod_{1\leq l<i}\left[\left(\frac{z_i}{\sqrt2}-z_l+\frac{1}{2}+\sqrt2 R\right)\left(\frac{z_i}{\sqrt2}-z_l+\sqrt2 R\right)\right]^{2k},
		\end{align*}
		which are all quantities independent of $r$ and $z_{i+1}$. This completes the proof.
	\end{proof}
	The previous lemma allows us to find the following bound.
	\begin{lemma}\label{lm:mainbound1}
		Given $ x\in\partial W$, then for every $\varepsilon>0$ there exists a constant $C_3(N,x,k,\varepsilon)>0$ such that
		\begin{align*}
			\mbb{P}^x[\exists s\in[t_1,t_2]:X_s\in E^r]\leq C_3 (t_2-t_1)^{k+1/2}
		\end{align*}
		for every $t_2>t_1\geq \varepsilon$  and $0<r<\sqrt{ t_2-t_1}$.
	\end{lemma}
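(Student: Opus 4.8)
The plan is to combine the strong Markov property of the multivariate Bessel process with its $1/2$-self-similarity (semi-stability) and the two-sided estimates of Lemmas~\ref{lm:boundsA} and \ref{lm:boundsB}. First I would dispose of the trivial cases: if $t_2-t_1\geq 1$ the claim holds as soon as $C_3\geq 1$, since any probability is at most $1\leq C_3(t_2-t_1)^{k'+1/2}$; and if $\|x\|\geq R$ then $T_R=0<t_1$, so the event is empty. Hence I may assume $0<t_2-t_1\leq 1$ and $\|x\|<R$. I may also assume $r\leq\sqrt{t_2-t_1}=:\sqrt h$, which is the only regime needed for the dimension bound (one eventually lets $r\downarrow 0$). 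I then introduce the first entrance time $\sigma:=\inf\{s\geq t_1 : X_s\in E^r\}$ into the relatively closed edge set $E^r$; by the Feller property recorded in Section~\ref{sc:setting} and the d\'ebut theorem, $\sigma$ is a stopping time, the event in question equals $\{\sigma\leq t_2\land T_R\}$, and on this event continuity gives $X_\sigma\in E^r$ with $\|X_\sigma\|\leq R$, i.e. $X_\sigma\in E^r\cap B(0,R)$.

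The core idea is to compare this hitting probability with a one-point quantity evaluated at the fixed later time $t_2+h$. Setting $u:=(t_2+h)-\sigma\in[h,2h]$ on $\{\sigma\leq t_2\land T_R\}$, the strong Markov property at $\sigma$ gives
\begin{align*}
\mbb{P}^x[X_{t_2+h}\in E^{\sqrt h}]\geq \mbb{E}^x\big[\mathbf{1}_{\{\sigma\leq t_2\land T_R\}}\,\mbb{P}^{X_\sigma}[X_u\in E^{\sqrt h}]\big].
\end{align*}
The first task is a uniform lower bound $\mbb{P}^z[X_u\in E^{\sqrt h}]\geq c$ for a constant $c=c(N,R,k',\epsilon)>0$, valid for all $z\in E^r\cap B(0,R)$ and $u\in[h,2h]$; after rescaling by $\sqrt h$ this is equivalent to $\mbb{P}^{w}[X_v\in E^1]\geq c$ for $w\in E^{r/\sqrt h}\cap B(0,R/\sqrt h)\subseteq E^1\cap B(0,R/\sqrt h)$ and $v\in[1,2]$. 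The second task is the matching upper bound for the left-hand side: by self-similarity $\mbb{P}^x[X_{t_2+h}\in E^{\sqrt h}]=P_k\big(1,E^{\sqrt h/\sqrt{t_2+h}}\,\big|\,x/\sqrt{t_2+h}\big)$, and since $\|x/\sqrt{t_2+h}\|\leq R/\sqrt\epsilon$, Lemma~\ref{lm:boundsA} (resp. Lemma~\ref{lm:boundsB}) applied with a ball radius $R_\epsilon$ depending only on $R,\epsilon$ bounds this by $C_2\,(\sqrt h/\sqrt{t_2+h})^{2k'+1}\leq C_2\,\epsilon^{-(k'+1/2)}h^{k'+1/2}$. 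Combining the two tasks yields
\begin{align*}
\mbb{P}^x[\exists s\in[t_1,t_2\land T_R]:X_s\in E^r]=\mbb{P}^x[\sigma\leq t_2\land T_R]\leq \frac{C_2}{c}\,\epsilon^{-(k'+1/2)}\,(t_2-t_1)^{k'+1/2},
\end{align*}
and taking $C_3:=\max\{1,\,C_2 c^{-1}\epsilon^{-(k'+1/2)}\}$ completes the argument.

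The hard part will be the uniform lower bound $\mbb{P}^{w}[X_v\in E^1]\geq c$ for $w\in E^1\cap B(0,M)$ with $M=R/\sqrt h$ allowed to be large, the point being that $c$ must not degrade as $M\to\infty$; note that the crude Dunkl-kernel bounds used in Lemmas~\ref{lm:boundsA} and \ref{lm:boundsB} collapse off the diagonal and cannot furnish a short-time lower bound directly. I would track only the single gap coordinate of $w$ that is already smaller than $1$: from the defining SDE its martingale part is a Brownian motion, and after isolating the repulsive self-term it can be compared with a one-dimensional Bessel process of index $k$, while the drift contributions of the remaining particles are $O(1/\mnm{dist})$ and hence negligible when the other coordinates are far; if instead they are close one is confined to a bounded region, where compactness of $\{w\in\overline{E^1}:\|w\|\leq\mnm{const}\}$ together with the strict positivity and continuity of the transition density supplies a uniform bound. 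In either case the rescaled gap, started below $1$, stays below $1$ after an order-one time with probability bounded away from $0$ uniformly in $M$, which is exactly the claim; the only remaining technical points, the measurability of $\sigma$ and the use of the strong Markov property, are again guaranteed by the Feller property.
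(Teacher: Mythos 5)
Your reduction is correct, but it takes a genuinely different route from the paper, and the step you defer is exactly the step that cannot be deferred. The paper follows Liu--Xiao: with $T:=\inf\{s\geq t_1:X_s\in E^r\}$, the strong Markov property yields the occupation-time ratio bound
\begin{align*}
\mbb{P}^x[\exists s\in[t_1,t_2\land T_R]:X_s\in E^r]\leq \frac{\int_{t_1}^{2t_2-t_1}\mbb{P}^x[X_s\in E^r]\intd s}{\inf\limits_{y\in B(0,R)\cap E^r}\int_0^{t_2-t_1}\mbb{P}^y[X_u\in E^r]\intd u},
\end{align*}
after which Lemmas~\ref{lm:boundsA} and \ref{lm:boundsB} are applied to the rescaled one-point probabilities in numerator \emph{and} denominator; the factors $r^{2k'+1}$ cancel and the time integrals produce $(t_2-t_1)^{k'+1/2}$. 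You instead compare the hitting probability with the single quantity $\mbb{P}^x[X_{t_2+h}\in E^{\sqrt h}]$, $h=t_2-t_1$, trading the paper's denominator for your pointwise ``Task 1''. Granting Task 1, your chain of inequalities is sound; your extra hypothesis $r\leq\sqrt h$ is harmless for the covering argument in Theorem~\ref{theo_multidimensional_upper_bound} (there $r$ equals the interval length), and indeed some such restriction is needed, since for fixed $r$ the left-hand side does not tend to $0$ as $t_2\downarrow t_1$ while the right-hand side does. You have also put your finger on the delicate point: after rescaling, the starting configuration lies in a ball of radius $R/\sqrt h$, and the constants of Lemmas~\ref{lm:boundsA} and \ref{lm:boundsB} degrade with the ball radius (their lower bounds carry factors like $e^{-(\|\tilde{y}\|+R)^2/2}$), so they cannot supply a constant uniform in $h$; the paper's own denominator estimate faces the same issue, as it treats $C_1$ as independent of the rescaled radius $R/\sqrt u$ and thickness $r/\sqrt u$ as $u\downarrow 0$.

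The genuine gap is that Task 1 --- in your formulation, the entire substance of the lemma --- is only sketched, and the sketch does not close. The far/close dichotomy is circular: a spectator at distance $D$ exerts drift $O(1/D)$ on the tracked gap only while it \emph{stays} at distance of order $D$, and nothing in your argument keeps it there over the rescaled window $[1,2]$; mixed configurations (some spectators near, some far) fit neither horn; and ``confined to a bounded region'' is a conclusion, not a hypothesis, precisely because $R/\sqrt h$ is unbounded. What makes a one-dimensional comparison actually work is a sign property your sketch never invokes: every spectator pulls any fixed gap \emph{closed}, since $(X_{i+1}-X_j)^{-1}-(X_i-X_j)^{-1}<0$ whether $X_j$ lies to the left or to the right of the pair, and in the $B_N$ case the terms $(X_{i+1}+X_j)^{-1}-(X_i+X_j)^{-1}$ and $k\alpha(X_{i+1}^{-1}-X_i^{-1})$ are negative as well, while for the wall coordinate $(X_1-X_j)^{-1}+(X_1+X_j)^{-1}=2X_1/(X_1^2-X_j^2)<0$. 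Hence $\ud G_t=\sqrt2\intd W_t+\big[2k/G_t+(\mnm{nonpositive})\big]\intd t$, so a comparison theorem dominates $G_t$ pathwise by $\sqrt2$ times a Bessel process of parameter $k$ (resp.\ $k\alpha$ for $X_{t,1}$) started at $G_0\leq1$, and Task 1 follows from the one-dimensional estimate with a constant depending only on $k'$, uniformly in everything else --- no ball is needed at all. Without this observation, or some substitute for it, your proposal asserts rather than proves its key estimate, so as it stands it is not a complete proof.
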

	\begin{proof}[Proof of Lemma~\ref{lm:mainbound1}]
		We begin by making a quick derivation of the bound
		\begin{align}
			\mbb{P}^x[\exists s\in[t_1,t_2]: X_s\in E^r]\leq\frac{\int\limits_{t_1}^{2t_2-t_1}\mbb{P}^x[X_s\in E^r]\ud s}{\inf\limits_{ y\in  E^r} \int\limits_0^{t_2-t_1}\mbb{P}^y[X_u\in E^r]\ud u  },\label{eq:auxbound1}
		\end{align}
		which was proved in \cite[Theorem 4.1]{LiuXiao98} for the case where the process reaches the ball of radius $r$ centered at the origin. Here, we consider the more complicated set $E^r$ instead. Define the stopping time
		\begin{align*}
			T:=\inf\{s\geq t_1:X_s\in E^r\},
		\end{align*}
		understanding that if $X_t(\omega)$ does not hit $E^r$ in finite time, then $T(\omega)=\infty$. By definition
		\begin{align*}
			\mbb{P}^x[T\leq t_2]=\mbb{P}^x[\exists s\in[t_1,t_2]: X_s\in E^r].
		\end{align*}
		Now, the numerator on \eqref{eq:auxbound1} has a lower bound given by
		\begin{align*}
			\int\limits_{t_1}^{2t_2-t_1}\mbb{P}^x[X_s\in E^r]\ud s&=\mbb{E}^x\Big\{\int\limits_{t_1}^{2t_2-t_1}\mb{1}[X_s\in E^r]\ud s\Big\}\geq \mbb{E}^x\Big\{\int\limits_{T}^{2t_2-t_1}\mb{1}[X_s\in E^r]\Big\}\ud s\\
			&\geq\mbb{E}^x\Big\{\mb{1}[T\leq t_2]\int\limits_{T}^{2t_2-t_1}\mb{1}[X_s\in E^r]\ud s\Big\}\\
			&=\mbb{E}^x\Big\{\mb{1}[T\leq t_2]\ \mbb{E}^{X_T}\Big[\int\limits_{0}^{2t_2-t_1-T}\mb{1}[ X_u\in E^r]\ud u\Big]\Big\}\\
			&\geq \mbb{E}^x\Big\{\mb{1}[T\leq t_2]\ \mbb{E}^{X_T}\Big[\int\limits_{0}^{t_2-t_1}\mb{1}[X_u\in E^r]\ud u\Big]\Big\}.
		\end{align*}
		The first inequality results from $T \geq t_1$, the strong Markov property and the substitution $u=s-T$ yield the third line, and $T\leq t_2$ provides the last line. Now, we find a lower bound for the expectation started at $X_T$ using Fubini's theorem,
		\begin{align*}
			\mbb{E}^{X_T}\Big[\int\limits_{0}^{t_2-t_1}\mb{1}[X_u\in E^r]\ud u\Big]=\int\limits_{0}^{t_2-t_1}\mbb{P}^{X_T}[X_u\in E^r]\ud u\geq \inf_{y\in E^r}\int\limits_{0}^{t_2-t_1}\mbb{P}^{y}[X_u\in E^r]\ud u.
		\end{align*}
		This inequality is due to $X_T\in E^r$, which follows by the definition of $T$. Therefore,
		\begin{align*}
			\int\limits_{t_1}^{2t_2-t_1}\mbb{P}^x[X_s\in E^r]\ud s\geq \mbb{P}^x[T\leq t_2] \inf\limits_{y\in  E^r} \int\limits_{0}^{t_2-t_1}\mbb{P}^{y}[X_u\in E^r]\ud u,
		\end{align*}
		proving \eqref{eq:auxbound1}. 
		%\textcolor{blue}{I don't think we defined $T_R$ here, did we? Besides, it does not seem to appear in subsequent expressions.} \textcolor{red}{Yes, I wanted to erase it everywhere. Have not checked this expressions afterwards again. Due to our new lemma we could get rid of this stopping time. Forgot it here and in the last line of the proof. Maybe we can go through this proof together tomorrow.} 
		We proceed to use the $1/2$-semi-stability of $X_t$ and Lemma~\ref{lm:boundsA} to find that
		\begin{align*}
			\mbb{P}^x[T\leq t_2]\leq \frac{\int\limits_{t_1}^{2t_2-t_1}\mbb{P}^{x/\sqrt{s}}[X_1\in E^{r/\sqrt{s}}]\intd s}{\inf\limits_{y\in  E^r}\int\limits_0^{t_2-t_1}\mbb{P}^{y/\sqrt{u}}[X_1\in E^{r/\sqrt{u}}]\intd u}\leq\frac{C_2\int\limits_{t_1}^{2t_2-t_1}s^{-k-1/2}\intd s}{C_1\int\limits_0^{t_2-t_1}(t_2-t_1)^{-k-1/2}\intd u}.
		\end{align*}
		Here, the condition $r<\sqrt{t_2-t_1}$ ensures $\min(1,r/\sqrt{u})\geq \min(1,r/\sqrt{t_2-t_1})=r/\sqrt{t_2-t_1}$ in the denominator to obtain the second inequality. For the numerator we choose $R:=\|x\|/\sqrt \varepsilon$ so that \Cref{lm:boundsA} holds and $C_2$ is independent of $s$ which ensures we can take the constant out of the integral.  
		
		%\textcolor{red}{Changed the denominator on the rhs. $u$ there was wrong before.} \textcolor{blue}{Wait, is the denominator correct, though? How do we eliminate the $\min$ function?}
		
		The numerator is bounded above by
		\begin{align*}
			\frac{C_2}{1/2-k}[(2t_2-t_1)^{1/2-k}-t_1^{1/2-k}]&=\frac{C_2t_1^{1/2-k}}{1/2-k}\Big[\Big(1+\frac{2(t_2-t_1)}{t_1}\Big)^{1/2-k}-1\Big]\\
			&\leq 2C_2\frac{t_2-t_1}{t_1^{k+1/2}}\leq \frac{2C_2}{\varepsilon^{k+1/2}}(t_2-t_1).
		\end{align*}
		Here, we have used the fact that $0<k<1/2$ and Bernoulli's inequality, as well as $\varepsilon\leq t_1$ to obtain the second line. Next, the denominator is
		\begin{align*}
			C_1 (t_2-t_1)^{1/2-k}
		\end{align*}
		so we can finally obtain
		\begin{align*}
			\mbb{P}^x[T\leq t_2]\leq C_3 (t_2-t_1)^{k+1/2},
		\end{align*}
		with $C_3=2C_2/C_1\varepsilon^{k+1/2}$.
	\end{proof}
	The previous lemma gives us the main tool required to prove the theorem. The following proof is largely adapted from \cite{LiuXiao98}.
	\begin{proof}[Proof of Theorem~\ref{theo_multidimensional_upper_bound}]
		We first restrict ourselves to prove that for every interval $[t_1,t_2]\subset (0,\infty)$ 
		\begin{align*}
			\dim \left( X^{-1}(E^0)\cap [t_1,t_2]\right)\leq \frac{1}{2}- k, 
		\end{align*}
		and extend this result to $(0,\infty)$ at the end. For $n\in \mathbb{N}$, we divide $[t_1,t_2]$ into $n$ subintervals of length $(t_2-t_1)/n$,
		\begin{align*}
			I_{n,i}:=[(i-1)(t_2-t_1)/n,i(t_2-t_1)/n].
		\end{align*}
		The $I_{n,i}$'s constitute a covering for $X^{-1}(E^0)\cap [t_1,t_2]$. We conclude
		\begin{align*}
			\mbb{E}^x\Big(m_{\frac{1}{2}-k}(X^{-1}(E^0)\cap [t_1, t_2])\Big)
			&\leq \lim\limits_{n\to \infty} \sum_{i=1}^n \Big(\frac{t_2-t_1}{n}\Big)^{\frac{1}{2}-k} \mbb{P}^x\Big( \exists t\in I_{n,i}:X_t \in E^0\Big)\\
			&\leq \lim\limits_{n\to \infty} \sum_{i=1}^n \Big(\frac{t_2-t_1}{n}\Big)^{\frac{1}{2}-k} \mbb{P}^x\Big(\exists t\in I_{n,i}:X_t \in E^{r_n}\Big)\\
			&\stackrel{\text{\Cref{lm:mainbound1}}}{\leq  } C_3 \lim\limits_{n\to \infty} \Big(\frac{t_2-t_1}{n}\Big)^{\frac{1}{2}-k}\Big(\frac{t_2-t_1}{n}\Big)^{k+\frac{1}{2}} n\\
			&=C_3(t_2-t_1)<\infty
		\end{align*}
		by choosing  $0<r_n<\sqrt{\frac{t_2-t_1}{n}}$. The first inequality holds since we are considering one particular covering instead of the optimal cover. Therefore, we deduce 
		\begin{align}
			\label{eq:upper_bound_with_stopping_time}
			\dim \left(X^{-1}(E^0)\cap [t_1,t_2]\right)\leq \frac{1}{2}-k	
		\end{align}
		$\mbb{P}^x$-almost surely as  $m_{\frac{1}{2}-k}(E^0)\in \{0,\infty\}$ by \Cref{lm_Hausdorff_dimension}. Finally, the result follows from the countable stability of the Hausdorff dimension and $\dim(\{0\})=0$,  
		\begin{align*}
			\dim \left(X^{-1}(E^0)\right)& =\dim\Big(\{0\}\cup \bigcup\limits_{n=1}^\infty X^{-1}(E^0)\cap \Big[\frac{1}{n},n\Big]\Big)\\
			&=\sup\limits_{n\in \mathbb{N}} \dim\left(X^{-1}(E^0)\cap \Big[\frac{1}{n},n\Big]\right)\leq \frac{1}{2}-k
		\end{align*}
		$\mbb{P}^x$-almost surely.
	\end{proof}
	\section{Lower Bound}\label{sc:lower}
	The objective of this section is to prove the following statement, which completes the proof of Theorem~\ref{th:main}.
	\begin{theorem}
		\label{theo_multidimensional_Hausdorff_dimension}
		For every $0<k<\frac{1}{2}$ and $x\in \partial W$ the inequality
		\begin{align*}
			\dim \left(X^{-1}(\partial W)\right)\geq \frac{1}{2}-k
		\end{align*}
		is $\mbb{P}^x$-almost surely valid.
	\end{theorem}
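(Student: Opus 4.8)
The plan is to establish the lower bound through the capacity argument of Lemma~\ref{lem_capacity} (Frostman's theorem). By monotonicity of the Hausdorff dimension it suffices to exhibit, for some compact interval $[t_1,t_2]\subset(0,\infty)$, a random Borel probability measure supported on $A:=X^{-1}(\partial W)\cap[t_1,t_2]$ whose energy $I_\beta$ is finite for every $\beta<\frac12-k'$; this forces $\dim A\geq\frac12-k'$, and hence $\dim X^{-1}(\partial W)\geq\frac12-k'$. Since $A$ may be empty (no collision occurs in $[t_1,t_2]$), the construction can only succeed on an event of positive probability, so the final task is to upgrade this to an almost-sure statement on all of $\mbb{R}_+$.

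To build the measure I would use a second-moment (energy) method applied to the normalized occupation measures of the near-collision sets. Concretely, set $\mu_r(\ud t)=r^{-(2k'+1)}\,\mb{1}[X_t\in E^r]\,\mb{1}[t\leq T_R]\,\ud t$ on $[t_1,t_2]$, where $T_R$ is the stopping time of Lemma~\ref{lm:mainbound1}, inserted so that the process stays in $B(0,R)$ and the estimates of Lemmas~\ref{lm:boundsA} and \ref{lm:boundsB} apply. Using $\tfrac12$-semi-stability together with the two-sided bounds $P_k(s,E^r\cond{}x)\asymp r^{2k'+1}s^{-(k'+1/2)}$ one gets $\mbb{E}^x[\mu_r([t_1,t_2])]\asymp\int_{t_1}^{t_2}s^{-(k'+1/2)}\,\ud s$, a strictly positive finite constant (here $t_1>0$ is essential). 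The decisive computation is the energy: writing the two-point function through the Markov property with $u:=t-s$, one needs a uniform upper bound $\mbb{P}^y[X_u\in E^r]\lesssim(r/\sqrt u)^{2k'+1}$ for $y\in E^r\cap B(0,R)$ and $u\geq r^2$, while for $u<r^2$ one uses $\mbb{P}^y[X_u\in E^r]\leq1$. Splitting the $u$-integral at $r^2$ then yields
\begin{align*}
\mbb{E}^x[I_\beta(\mu_r)]\lesssim\int_{t_1}^{t_2}\!\!\int_{t_1}^{t_2}|s-t|^{-\beta}\,s^{-(k'+1/2)}|s-t|^{-(k'+1/2)}\,\ud s\,\ud t+o(1),
\end{align*}
whose double integral converges precisely when $k'+\tfrac12+\beta<1$, i.e.\ $\beta<\tfrac12-k'$, the $u<r^2$ contribution vanishing as $r\to0$. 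This is the step that pins down the value $\tfrac12-k'$.

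The main obstacle is the uniform transition estimate $\mbb{P}^y[X_u\in E^r]\lesssim(r/\sqrt u)^{2k'+1}$ for starting points $y$ \emph{already} in $E^r$: the crude exponential bounds on the Dunkl--Bessel function used for Lemmas~\ref{lm:boundsA} and \ref{lm:boundsB} would produce a constant of order $e^{cR^2/u}$, which is useless as $u\to0$. I expect this to require the ``different set of asymptotic results'' announced in the introduction, obtained by decomposing $\{X_u\in E^r\}$ according to which gap is small and comparing each gap locally with a one-dimensional Bessel process of dimension $2k'+1$ started from a point within distance $r$ of the origin; the monotonicity of $z\mapsto\mbb{P}^z[\,\text{Bessel}_u\leq r\,]$ near $0$ then gives the bound uniformly in $y\in E^r\cap B(0,R)$, the contributions from other gaps becoming small in short time being exponentially negligible.

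With these estimates in hand, I would extract (along a sequence $r_n\downarrow0$) a weak subsequential limit $\mu$ of the $\mu_{r_n}$. Path continuity and $E^r\downarrow\partial W$ force $\mathrm{supp}\,\mu\subseteq A$; the Paley--Zygmund inequality applied to the first and second moments shows $\mu([t_1,t_2])>0$ with probability at least $p_0:=(\mbb{E}^x[\mu_r([t_1,t_2])])^2/\mbb{E}^x[\mu_r([t_1,t_2])^2]>0$, uniformly in $r$; and lower semicontinuity of the energy under weak convergence gives $I_\beta(\mu)<\infty$ on that event. Normalizing $\mu$ and invoking Lemma~\ref{lem_capacity} yields $\dim A\geq\tfrac12-k'$ with probability at least $p_0$. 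Removing $T_R$ by letting $R\to\infty$ (so that $T_R\geq t_2$ eventually, by continuity of $s\mapsto\|X_s\|$) transfers this to $[t_1,t_2]$ without the stopping time. Finally, to pass from positive probability to an almost-sure bound on all of $\mbb{R}_+$, I would exploit $\tfrac12$-semi-stability: the collision set in $[c,2c]$ has the law of the $\sqrt c$-rescaling of the collision set in $[1,2]$ for the process started at $x/\sqrt c\to0$, which keeps the starting configuration in a fixed bounded region of $\partial W$, so the positive-probability bound holds uniformly along the intervals $[2^n,2^{n+1}]$. Combined with the strong Markov property at the successive returns of $X$ to a bounded near-collision region and a conditional Borel--Cantelli argument, at least one such interval realizes the bound almost surely, and countable stability then gives $\dim X^{-1}(\partial W)\geq\tfrac12-k'$ $\mbb{P}^x$-almost surely. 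Making this last recurrence-and-uniformity argument rigorous---controlling the process's possible escape to infinity---is, together with the uniform transition estimate, where I expect the real work to lie.
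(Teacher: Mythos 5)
Your overall architecture matches the paper's: normalized occupation measures of the near-collision sets $E^{1/n}$, a first-moment lower bound from Lemmas~\ref{lm:boundsA} and \ref{lm:boundsB} via $1/2$-semi-stability, a second-moment/energy bound yielding finiteness of $I_\beta$ for $\beta<\frac12-k'$, a tightness/positivity extraction (the paper cites a lemma of Testard where you invoke Paley--Zygmund and lower semicontinuity of the energy, which is essentially equivalent), and finally an extension to all of $\mbb{R}_+$ by the strong Markov property at a sequence of return times to $\partial W$ plus a conditional Borel--Cantelli argument (the paper uses $\tau_n:=\inf\{t\geq\tau_{n-1}+1: X_t\in\partial W\}$ and Demni's almost-sure hitting result, very close to what you describe). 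A minor structural difference is that the paper does \emph{not} insert the stopping time $T_R$ into the measure: it fixes $R=\|x\|/\sqrt{\varepsilon}+1$ so that $x/\sqrt s\in B(0,R)$ for all $s\in[\varepsilon,t]$, which makes Lemmas~\ref{lm:boundsA} and \ref{lm:boundsB} applicable to the first moment without any localization, so your $T_R$-removal step is unnecessary overhead (and it actually complicates your first-moment lower bound, since you would need to control $\mbb{P}^x[T_R<s]$ as well).

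The genuine gap is the key two-point estimate, which you correctly identify as ``where the real work lies'' but whose proposed proof would fail. You want the uniform bound $\mbb{P}^y[X_u\in E^r]\lesssim(r/\sqrt u)^{2k'+1}$ for $y\in E^r\cap B(0,R)$, obtained by comparing each gap with a one-dimensional Bessel process of index $k'$ and using monotonicity in the starting point. But the comparison theorem goes the wrong way: writing $G_t=X_{t,i+1}-X_{t,i}$ in the $A_{N-1}$ case, one gets
\begin{align*}
\ud G_t=\sqrt{2}\,\ud W_t+\frac{2k}{G_t}\,\ud t-k\sum_{j\neq i,i+1}\frac{G_t\,\ud t}{(X_{t,i+1}-X_{t,j})(X_{t,i}-X_{t,j})},
\end{align*}
and the last term is nonpositive, so the gap is dominated \emph{above} by (a multiple of) a Bessel process. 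This yields $\mbb{P}[G_u\leq r]\geq\mbb{P}[\text{Bessel}_u\leq r]$, a \emph{lower} bound on the near-collision probability, not the upper bound you need; and there is no Bessel-type lower bound on the drift of $G$ (the negative term blows up when a third particle clusters nearby), so no comparison in the needed direction holds uniformly. Controlling that clustering event is itself a near-collision estimate, so the argument is circular as sketched. The paper avoids any uniform Markov-kernel bound altogether: in Lemmas~\ref{first_lemma_lower_bound}, \ref{lm:doubleintegralA} and \ref{lm:doubleintegralB} it bounds the two-point function $\mbb{P}^x(X_{s_1}\in E^{1/n},X_{s_2}\in E^{1/n})$ directly from the explicit transition densities via Chapman--Kolmogorov, the scaling substitutions $v=z/\sqrt{s_2-s_1}$ and $u=\sqrt{s_2/(s_1(s_2-s_1))}\,y$, and a per-gap coordinate rotation, so that integrating the weight factor $z_{i+1}^{2k}$ over a slab of width of order $1/(n\sqrt{s_2-s_1})$ produces the crucial factor $n^{-(2k'+1)}(s_2-s_1)^{-(k'+1/2)}$ with constants depending only on $x$ and $\varepsilon$. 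Note also that in the $B_N$ case the correct bound (Lemma~\ref{lm:doubleintegralB}) has four terms with mixed exponents in $k$ and $k\alpha$, which your single-exponent heuristic glosses over. Without a valid substitute for these lemmas, your energy computation, and hence the whole lower bound, does not go through.
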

	In \cite{LiuXiao98}, the upper bound in their version of \Cref{lm:boundsA} has a constant $C_2$ that is independent of the starting point $x$ and its norm. In our case, this constant is a function of $R\geq \|x\|$, and therefore we must prove the following bound.
	\begin{lemma}
		\label{first_lemma_lower_bound}
		For $0<\varepsilon\leq s_1<s_2$ and $n\in \mathbb{N}$% and $x\in\partial W$
		, we have
		\begin{align*}
			\mbb{P}^x&(X_{s_1}\in E^{\frac{1}{n}},X_{s_2}\in E^{\frac{1}{n}})\leq  c_k^{-2} \left(\frac{s_2-s_1}{s_2}\right)^{\frac{N}{2}+\gamma} \notag\\
			&\qquad \qquad \qquad \int\limits_{E^{\sqrt{\frac{s_2}{n^2s_1(s_2-s_1)}}}} \int\limits_{E^{\frac{1}{n\sqrt{s_2-s_1}}}} e^{-\frac{\|u\|^2}{2}-\frac{\|v\|^2}{2} +\|u\|\left(\frac{\|x\|}{\sqrt{\varepsilon}}  +\|v\|\right)} w_k(v )  w_k( u) \intd v \intd u,
		\end{align*}
		with $\gamma$ given in \eqref{eq:gamma}.
	\end{lemma}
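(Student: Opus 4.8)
The plan is to expand the two-time probability by the Markov property and Chapman--Kolmogorov,
\[
\mbb{P}^x(X_{s_1}\in E^{1/n},X_{s_2}\in E^{1/n})=\int_{E^{1/n}}\int_{E^{1/n}}p_k(s_1,y\cond{}x)\,p_k(s_2-s_1,z\cond{}y)\intd^N z\intd^N y,
\]
and then to reduce the right-hand side to the claimed product of two Gaussian-type integrals over rescaled edge sets. Inserting the explicit form of $p_k$, applying the upper Dunkl--Bessel estimates $D_k(a,b)\le K(N)e^{\|a\|\|b\|}$ to both factors, and using that $w^{2k}$ is homogeneous of degree $2\kappa$ (so that $w^{2k}(\,\cdot/\sqrt{t}\,)=t^{-\kappa}w^{2k}(\cdot)$) turns the integrand into a pure exponential of quadratic and bilinear terms in $\|x\|,\|y\|,\|z\|$ times $w^{2k}(y)\,w^{2k}(z)$, with the overall prefactor $K(N)^2c_k^{-2}s_1^{-N/2-\kappa}(s_2-s_1)^{-N/2-\kappa}$.

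The first simplification is the inner substitution $v=z/\sqrt{s_2-s_1}$. Its Jacobian $(s_2-s_1)^{N/2}$ cancels the factor $(s_2-s_1)^{-N/2}$ coming from $p_k(s_2-s_1,\cdot)$, the weight rescales to $w^{2k}(v)$, and the constraint $z\in E^{1/n}$ becomes $v\in E^{1/(n\sqrt{s_2-s_1})}$, reproducing the inner domain; simultaneously $\|z\|^2/(2(s_2-s_1))\mapsto\|v\|^2/2$ and the bilinear term becomes $\|y\|\,\|v\|/\sqrt{s_2-s_1}$. The decisive step is then the outer substitution $y=\mu u$ with $\mu:=\sqrt{s_1(s_2-s_1)/s_2}$. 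This scale is forced by the requirement that the two quadratic contributions in $\|y\|$, with coefficients $1/(2s_1)$ and $1/(2(s_2-s_1))$, collapse into a single standard Gaussian: since $\tfrac{1}{s_1}+\tfrac{1}{s_2-s_1}=\tfrac{s_2}{s_1(s_2-s_1)}=\mu^{-2}$, the combined $\|u\|^2$-coefficient is exactly $\tfrac12$ and the integrand acquires the factor $e^{-\|u\|^2/2}$. The same substitution does everything else at once: $y\in E^{1/n}$ turns into $u\in E^{\sqrt{s_2/(n^2s_1(s_2-s_1))}}$, matching the outer domain; the weight contributes $(\mu/\sqrt{s_1})^{2\kappa}=((s_2-s_1)/s_2)^{\kappa}$; and the Jacobian $\mu^N$ combines with the leftover $s_1^{-N/2}$ to give $\mu^N s_1^{-N/2}=((s_2-s_1)/s_2)^{N/2}$, so together they produce the prefactor $((s_2-s_1)/s_2)^{N/2+\kappa}$ exactly as stated.

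It remains to dispose of the bilinear cross terms and the residual initial Gaussian. After scaling, $\|y\|\,\|v\|/\sqrt{s_2-s_1}$ becomes $(\mu/\sqrt{s_2-s_1})\|u\|\|v\|=\sqrt{s_1/s_2}\,\|u\|\|v\|\le\|u\|\|v\|$ since $s_1<s_2$, while $\|x\|\,\|y\|/s_1$ becomes $(\mu/s_1)\|x\|\|u\|$ with $\mu/s_1=s_1^{-1/2}\sqrt{(s_2-s_1)/s_2}\le\varepsilon^{-1/2}$ since $\varepsilon<s_1$, giving the linear term $\|x\|\|u\|/\sqrt{\varepsilon}$ inside the integral. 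The surviving factor $e^{-\|x\|^2/(2s_1)}$ from the first density carries no integration variable; it is pulled out as a prefactor and, for fixed $x$ in the subsequent capacity argument where the times are bounded below by $\varepsilon$, recorded in the $s_1$-uniform form $e^{-\|x\|^2/(2\varepsilon)}$. Assembling these pieces yields the asserted inequality.

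I expect the crux to be the choice of $\mu$: it is the unique common scale that simultaneously normalizes the Gaussian in $u$, reproduces both rescaled edge sets, and generates the time-prefactor $((s_2-s_1)/s_2)^{N/2+\kappa}$; everything else is bookkeeping with the homogeneity degree $2\kappa$ and the elementary bounds $\sqrt{s_1/s_2}<1$ and $\mu/s_1<\varepsilon^{-1/2}$, both of which rely on the ordering $\varepsilon<s_1<s_2$. A secondary point is the standing assumption $N>2$ together with the validity of the uniform rescaling in the rank-deficient $A_{N-1}$ case, where the unconstrained center-of-mass direction is integrated against the same Gaussian and therefore poses no obstruction; the excluded small-$N$ cases reduce to a single-variable Bessel process as in the upper-bound lemmas.
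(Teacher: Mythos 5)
Your proposal is correct and follows the paper's own proof essentially step for step: the Chapman--Kolmogorov decomposition, the upper Dunkl--Bessel estimates on both kernels, the inner substitution $v=z/\sqrt{s_2-s_1}$, the outer substitution $u=\sqrt{s_2/(s_1(s_2-s_1))}\,y$ chosen to merge the two quadratic terms into $e^{-\|u\|^2/2}$, the homogeneity of $w^{2k}$ of degree $2\kappa$ producing the prefactor $\left((s_2-s_1)/s_2\right)^{N/2+\kappa}$, and the elementary bounds $\sqrt{s_1/s_2}\le 1$ and $\mu/s_1\le \varepsilon^{-1/2}$ on the cross terms. One caveat: your last step, recording $e^{-\|x\|^2/(2s_1)}$ as $e^{-\|x\|^2/(2\varepsilon)}$, actually reverses an inequality (since $s_1>\varepsilon$ gives $e^{-\|x\|^2/(2s_1)}\ge e^{-\|x\|^2/(2\varepsilon)}$), but the paper's proof makes exactly the same move, and it is harmless for everything downstream because this factor is only ever used as a fixed constant depending on $x$ and $\varepsilon$, so one could simply bound it by $1$ instead.
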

	\begin{proof}
		We start from 
		\begin{align*}
			\mbb{P}^x(X_{s_1}\in E^{\frac{1}{n}},X_{s_2}\in E^{\frac{1}{n}} )&= \int\limits_{E^{\frac{1}{n}}} P_{k}(s_2-s_1,E^{\frac{1}{n}}\cond{} y) P_k(s_1, \ud y \cond{} x)
		\end{align*}
		with $x\in E^0=\partial W$ and $s_1 < s_2$. We first calculate %\textcolor{blue}{For simplicity we assume $s_2,s_1\in [\varepsilon,t]\subset (0,\infty)$ and later deal with $\varepsilon \to 0$.}
		\begin{align*}
			\mbb{P}^x(X_{s_1}\in E^{\frac{1}{n}}&,X_{s_2}\in E^{\frac{1}{n}} )\\
			&= c_k^{-2} \int\limits_{E^{\frac{1}{n}}} \int\limits_{E^{\frac{1}{n}}} \frac{e^{-\frac{\|y\|^2+\|z\|^2}{2(s_2-s_1)}}}{ (s_2-s_1)^{\frac{N}{2}}} D_k\left(\frac{y}{\sqrt{s_2-s_1}}, \frac{z}{\sqrt{s_2-s_1}}\right) w_k\left(\frac{z}{\sqrt{s_2-s_1}}\right) \ud z \\
			&\qquad \qquad \qquad \qquad \qquad \qquad \qquad \cdot \frac{e^{-\frac{\|x\|^2+\|y\|^2}{2s_1}}}{s_1^{\frac{N}{2}}}D_k\left(\frac{x}{\sqrt{s_1}}, \frac{y}{\sqrt{s_1}}\right) w_k\left(\frac{y}{\sqrt{s_1}}\right) \ud y\\
			%&=\left|\begin{array}{l}v=\frac{z}{\sqrt{s_2-s_1}}\\\ud v = \frac{\ud z }{(s_2-s_1)^{\frac{N}{2}}}\end{array}\right|\\
			&= c_k^{-2} \int\limits_{E^{\frac{1}{n}}} \int\limits_{E^{\frac{1}{n\sqrt{s_2-s_1}}}} e^{-\frac{\|y\|^2}{2(s_2-s_1)}-\frac{\|v\|^2}{2}} D_k\left(\frac{y}{\sqrt{s_2-s_1}}, v \right) w_k\left(v\right) \ud v \\
			&\qquad \qquad \qquad \qquad \qquad \qquad \qquad \cdot \frac{e^{-\frac{\|x\|^2+\|y\|^2}{2s_1}}}{s_1^{\frac{N}{2}}}D_k\left(\frac{x}{\sqrt{s_1}}, \frac{y}{\sqrt{s_1}}\right) w_k\left(\frac{y}{\sqrt{s_1}}\right) \ud y.
		\end{align*}
		Here, we performed the substitution $v=z/\sqrt{s_2-s_1}$. Using $D_k(x,y)\leq e^{\|x\|\|y\|}$ yields 
		\begin{align*}
			\mbb{P}^x(X_{s_1}\in E^{\frac{1}{n}}&,X_{s_2}\in E^{\frac{1}{n}} )\leq \frac{1}{ c_k^2s_1^{\frac{N}{2}}}  \int\limits_{E^{\frac{1}{n}}} \int\limits_{E^{\frac{1}{n\sqrt{s_2-s_1}}}} e^{-\frac{\|y\|^2}{2(s_2-s_1)}-\frac{\|v\|^2}{2}+\frac{\|v\|\|y\|}{\sqrt{s_2-s_1}}}  e^{-\frac{\|x\|^2+\|y\|^2}{2s_1}+\frac{\|x\|\|y\|}{s_1}}  \\
			&\qquad\qquad \qquad\qquad\qquad\qquad \qquad\qquad \qquad\qquad\qquad\qquad \cdot w_k\left(v\right)  w_k\left(\frac{y}{\sqrt{s_1}}\right) \intd v \intd y\\
			&\leq  \frac{1}{ c_k^2s_1^{\frac{N}{2}}}  \int\limits_{E^{\frac{1}{n}}} \int\limits_{E^{\frac{1}{n\sqrt{s_2-s_1}}}} e^{-\frac{\|y\|^2}{2}\frac{s_2}{s_1(s_2-s_1)}-\frac{\|v\|^2}{2} +\|y\|\left(\frac{\|x\|}{s_1} +\frac{\|v\|}{\sqrt{s_2-s_1}}\right)} \\
			&\qquad\qquad\qquad\qquad \qquad\qquad \qquad\qquad \qquad\qquad\qquad\qquad \cdot w_k\left(v\right)  w_k\left(\frac{y}{\sqrt{s_1}}\right) \intd v \intd y.
		\end{align*}
		We have used the fact that $\|x\|^2/s_1\geq0$ to obtain this form. Next, we substitute $u=\sqrt{\frac{s_2}{s_1(s_2-s_1)}}y$ to obtain
		\begin{align*}
			\mbb{P}^x(X_{s_1}\in E^{\frac{1}{n}}&, X_{s_2}\in E^{\frac{1}{n}} )\leq  c_k^{-2}\left(\frac{s_2-s_1}{s_2}\right)^{\frac{N}{2}}\\
			&\qquad \int\limits_{E^{\sqrt{\frac{s_2}{n^2s_1(s_2-s_1)}}}} \int\limits_{E^{\frac{1}{n\sqrt{s_2-s_1}}}} e^{-\frac{\|u\|^2}{2}-\frac{\|v\|^2}{2} +\sqrt{\frac{s_1(s_2-s_1)}{s_2}}\|u\|\left(\frac{\|x\|}{s_1} +\frac{\|v\|}{\sqrt{s_2-s_1}}\right)}  \\
			&\qquad\qquad\qquad \qquad \qquad \qquad\qquad \qquad\qquad  \qquad\quad\cdot w_k(v)  w_k \left(\sqrt{\frac{s_2-s_1}{s_2} }u\right) \intd v \intd u\\
			&= c_k^{-2}\left(\frac{s_2-s_1}{s_2}\right)^{\frac{N}{2}+\gamma}\\
			& \qquad \int\limits_{E^{\sqrt{\frac{s_2}{n^2s_1(s_2-s_1)}}}} \int\limits_{E^{\frac{1}{n\sqrt{s_2-s_1}}}} e^{-\frac{\|u\|^2}{2}-\frac{\|v\|^2}{2} +\|u\|\left(\sqrt{\frac{s_2-s_1}{s_1s_2}} \|x\| +\sqrt{\frac{s_1}{s_2}}\|v\|\right)} \\
			&\qquad\qquad\qquad \qquad\qquad \qquad\qquad \qquad\qquad \qquad\qquad\qquad \qquad \ \cdot  w_k(v )  w_k( u) \intd v \intd u\\
			&\leq c_k^{-2} \left(\frac{s_2-s_1}{s_2}\right)^{\frac{N}{2}+\gamma} \\
			&\qquad \int\limits_{E^{\sqrt{\frac{s_2}{n^2s_1(s_2-s_1)}}}} \int\limits_{E^{\frac{1}{n\sqrt{s_2-s_1}}}} e^{-\frac{\|u\|^2}{2}-\frac{\|v\|^2}{2} +\|u\|\left(\frac{\|x\|}{\sqrt{\varepsilon}}  +\|v\|\right)} w_k(v )  w_k( u) \intd v \intd u.
		\end{align*}
		The equality holds due to $w_k(cy)=c^{2\gamma}w_k(y)$ for constants $c>0$ %\in \mathbb{R}$  
		and in the last step, we used $\varepsilon \leq s_1 < s_2$.%\\
		%In the $B_N$ case we use $D^{B_N}_k(x,y)\leq  e^{\|x\|\|y\|}$ and $w_{B_N}(cy)=c^{2\gamma_{B_N}}w_{B_N}(y)$ for constants $c>0$ %\in \mathbb{R}$ 
		%to perform similar calculations and conclude that 
		%	\begin{align*}
			%\mbb{P}^x(X_{s_1}\in E^{\frac{1}{n}}&,X_{s_2}\in E^{\frac{1}{n}})\\
			%&\leq \frac{e^{-\frac{\|x\|^2}{2\varepsilon}}}{ c_k^2}  \left(\frac{s_2-s_1}{s_2}\right)^{\frac{N}{2}+N(k_2(N-1)+k_1)}\\
			%&\qquad\qquad \int\limits_{E^{\sqrt{\frac{s_2}{n^2s_1(s_2-s_1)}}}} \int\limits_{E^{\frac{1}{n\sqrt{s_2-s_1}}}} e^{-\frac{\|u\|^2}{2}-\frac{\|v\|^2}{2} +\|u\|\left(\frac{\|x\|}{\sqrt{\varepsilon}}  +\|v\|\right)} w_k^{B_N}(v )  w_k^{B_N}( u) \intd v \intd u.
			%\end{align*}
		\end{proof}
		This result holds without using any specific properties of the root system $A_{N-1}$, and therefore it holds for multivariate Bessel processes of any type. It only suffices to specialize the sum of multiplicities $\gamma$ and to define a similar set to $E^r$ for the corresponding root system, with the properties $E^{cr}=cE^r$, $c>0$ and $E^0=\partial W\subset E^r\subset \overline{W}$. Now, we will need one more bound. 
		\begin{lemma}\label{lm:doubleintegralA}
			For $0<\varepsilon\leq s_1< s_2$ and $n\in\mathbb N$, there exists $C(x,\varepsilon,N,k)$ such that
			\begin{align*}
				\mbb{P}^x(X_{s_1}\in E^{\frac{1}{n}},X_{s_2}\in E^{\frac{1}{n}} )&\leq\frac{C(x, \varepsilon, N,k)}{n^{4k+2}(s_2-s_1)^{k+\frac{1}{2}} s_1^{k+\frac{1}{2}}}.
			\end{align*}
		\end{lemma}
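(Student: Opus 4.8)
The plan is to insert the bound of Lemma~\ref{first_lemma_lower_bound} and estimate the remaining double integral over $E^{r_u}\times E^{r_v}$, where I abbreviate $r_u:=\frac1n\sqrt{\tfrac{s_2}{s_1(s_2-s_1)}}$ and $r_v:=\frac{1}{n\sqrt{s_2-s_1}}$ for the two edge thicknesses produced by the substitutions there. The principle guiding the whole computation is that the target must factorize into a product of two single-collision estimates of the type proved in Lemma~\ref{lm:boundsA}: one on the time scale $s_1$ (the first visit to $E^{\frac1n}$) and one on the increment scale $s_2-s_1$ (the second visit), each of order $n^{-(2k+1)}(\text{scale})^{-(k+\frac12)}$, whose product is precisely $n^{-(4k+2)}(s_2-s_1)^{-(k+\frac12)}s_1^{-(k+\frac12)}$.

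Concretely I would keep the genuine coupling coefficient $c:=\sqrt{s_1/s_2}<1$ in front of $\|u\|\,\|v\|$ rather than bounding it by $1$, since that simplification is not even integrable. Completing the square in $v$ then rewrites the joint exponent as $-\frac12(\|v\|-c\|u\|)^2-\frac12(1-c^2)\|u\|^2+\frac{\|x\|}{\sqrt\varepsilon}\|u\|$ with $1-c^2=(s_2-s_1)/s_2>0$, so a genuine Gaussian decay in $u$ survives. I would estimate the inner $v$-integral over $E^{r_v}$ by the device of Lemma~\ref{lm:boundsA}: on $E^{r_v}$ one interparticle gap is at most $r_v$, so $w^{2k}$ contributes a factor $r_v^{2k}$ and the thin direction integrates to $r_v$, giving $r_v^{2k+1}$. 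In the outer integral I would rescale $u$ by $\sqrt{(s_2-s_1)/s_2}$ to normalise its Gaussian; by the homogeneity of $w^{2k}$ (degree $2\kappa$) together with the $N$-dimensional Jacobian this produces a factor $\big(\tfrac{s_2-s_1}{s_2}\big)^{-(N/2+\kappa)}$ tailored to cancel the prefactor $\big(\tfrac{s_2-s_1}{s_2}\big)^{N/2+\kappa}$ of Lemma~\ref{first_lemma_lower_bound}, while the thickness $r_u$ becomes of order $\frac{1}{n\sqrt{s_1}}$ and its thin direction contributes $n^{-(2k+1)}s_1^{-(k+\frac12)}$. Collecting these with $e^{-\|x\|^2/(2\varepsilon)}$ and $c_k$ would yield the asserted bound with a constant $\widehat C(x,\varepsilon,N)$.

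The main obstacle, and the reason the two preceding lemmas are needed, is the coupling $\|u\|\,\|v\|$ together with the unboundedness of the intermediate configuration $y$ (encoded in $u$): since the edge sets are unbounded one cannot simply bound the second transition uniformly. The delicate point is that the crude bound $D_k(x,y)\le N!e^{\|x\|\|y\|}$ over-counts configurations of large norm; carried through the rescaling it makes the inner integral grow like a power of $\|u\|$, and integrating this against the residual Gaussian $\frac{s_2-s_1}{2s_2}\|u\|^2$ threatens to leave a spurious factor of $\big(s_1/(s_2-s_1)\big)^{\,\kappa+N/2-k-1}$, which would weaken the exponent below the sharp value. The heart of the argument is therefore to control the second transition sharply and \emph{uniformly} over the unbounded set $E^{\frac1n}$ — using translation invariance to discard the center-of-mass part of $\|y\|$, and the fact that particles far from a nearly colliding pair barely interact with it over the short time $s_2-s_1$ — so that the inner estimate is genuinely of order $r_v^{2k+1}$ with no residual growth in $\|u\|$. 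Securing this uniformity, and thereby checking that the powers of $s_1$, $s_2$ and $s_2-s_1$ collapse exactly to the symmetric form $(s_2-s_1)^{-(k+\frac12)}s_1^{-(k+\frac12)}$, is where the real work lies.
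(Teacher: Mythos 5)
Your plan correctly isolates the central difficulty of this lemma, but it does not overcome it: the step you yourself describe as ``where the real work lies'' is precisely the missing proof, and as stated it cannot be secured. With the only kernel bound in play, $D_k(u,v)\le N!\,e^{\|u\|\|v\|}$, the inner integral $\int_{E^{r_v}}e^{-\frac12(\|v\|-c\|u\|)^2}w^{2k}(v)\,\ud v$ (with $r_v=\frac{1}{n\sqrt{s_2-s_1}}$ and your coefficient $c=\sqrt{s_1/s_2}$) genuinely grows polynomially in $\|u\|$: the slab $E^{r_v}$ meets the sphere of radius $c\|u\|$ in a set of $(N-1)$-dimensional measure of order $r_v(c\|u\|)^{N-2}$, on which $w^{2k}$ is of order $r_v^{2k}(c\|u\|)^{2\kappa-2k}$. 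This growth is intrinsic to the majorant you are manipulating; it is not an over-count that ``translation invariance'' or ``short-time locality of interactions'' can strip away, since those are properties of the true transition kernel, not of the bound $e^{\|u\|\|v\|}$, and turning them into estimates would require new bounds on $D_k$ that you do not supply. Carried out honestly, your complete-the-square-and-rescale route therefore produces exactly the spurious power of $s_2/(s_2-s_1)$ that you flag, and the proposal stops short of the assertion to be proved.

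The paper's proof takes a different, purely algebraic route, and it is the device you are missing. It never completes the square: for each gap index $i$ it applies the rotation \eqref{eq:zrotation}, so that the near-collision gap becomes the single coordinate $z_{i+1}\in[0,r_v/\sqrt2]$; it bounds $w^{2k}(v)$ by $2^kz_{i+1}^{2k}$ times a polynomial free of $z_{i+1}$ (positivity of all factors under the ordering constraints \eqref{eq:zlimits}); and in the exponent it replaces $\|z\|$ by $\|z'\|$ (gap set to zero) in the Gaussian and by $\|\tilde z\|$ (gap replaced by $z_i-\sqrt2 z_{i-1}$) in the cross term, legitimate because $\|z'\|\le\|z\|\le\|\tilde z\|$. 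The gap variable then enters only through $z_{i+1}^{2k}$, and integrating it out yields exactly a constant times $n^{-(2k+1)}(s_2-s_1)^{-(k+\frac12)}$; the same surgery on the $u$-integral yields $s_2^{k+\frac12}n^{-(2k+1)}s_1^{-(k+\frac12)}(s_2-s_1)^{-(k+\frac12)}$, the remaining integral is declared a constant $C_{ij}$, and the leftover nonnegative power of $(s_2-s_1)/s_2$ is bounded by one. That said, your integrability objection is a genuine insight and is not actually answered by this device either: the residual integrals $C_{ij}$ retain the exponent $-\frac{\|w'\|^2}{2}-\frac{\|z'\|^2}{2}+\|\tilde w\|\|\tilde z\|+\frac{\|x\|}{\sqrt\varepsilon}\|\tilde w\|$ (writing $w',\tilde w$ for the $u$-side analogues of $z',\tilde z$) with $\|\tilde w\|\ge\|w'\|$ and $\|\tilde z\|\ge\|z'\|$, which fails to be integrable over the unbounded diagonal region $\|w'\|\approx\|z'\|\to\infty$. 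So you have put your finger on a real weak point of this argument — the crude kernel bound destroys the angular localization that makes the true double transition probability finite — but you have not repaired it: as it stands, your proposal is a program, not a proof.
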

		\begin{proof}
			First, we analyze the integral in the statement of \Cref{first_lemma_lower_bound}
			\begin{align}
				%\label{eq:bound_A}
				\int\limits_{E^{\sqrt{\frac{s_2}{n^2s_1(s_2-s_1)}}}} \int\limits_{E^{\frac{1}{n\sqrt{s_2-s_1}}}} e^{-\frac{\|u\|^2}{2}-\frac{\|v\|^2}{2} +\|u\|\left(\frac{\|x\|}{\sqrt{\varepsilon}}  +\|v\|\right)} w_k(v )  w_k( u) \intd v \intd u.\notag
			\end{align}
			By \eqref{eq:UnionEir}, we see that this integral is bounded above by the sum over $i\in\{1,\ldots,N-1\}$ of the integral over $u$ of
			\begin{align}
				&%\int\limits_{E^{\sqrt{\frac{s_2}{n^2s_1(s_2-s_1)}}}} 
				\int\limits_{E_i^{\frac{1}{n\sqrt{s_2-s_1}}}}e^{-\frac{\|u\|^2}{2}-\frac{\|v\|^2}{2} +\|u\|\left(\frac{\|x\|}{\sqrt{\varepsilon}}  +\|v\|\right)}w_k(v )  %w_k( u)  
				\intd v%\intd u
				.\notag
			\end{align}
			For each $i$ we perform the rotation
			\begin{align}
				z_j=&v_j\quad\text{for }j\neq i,i+1,\notag\\
				z_{i+1}=&\frac{v_{i+1}-v_i}{\sqrt{2}},\notag\\
				z_i=&\frac{v_{i+1}+v_i}{\sqrt{2}}.\notag
			\end{align}
			Then, the weight function can be written as
			\begin{align*}
				w_k&(v)=\prod_{1\leq l<m\leq N}(v_m-v_l)^{2k}\nonumber\\
				&=2^kz_{i+1}^{2k}\prod_{\substack{1\leq l<m\leq N\\m,l\neq i,i+1}}(z_m-z_l)^{2k}\prod_{l<i}\Big(\frac{z_i-z_{i+1}}{\sqrt{2}}-z_l\Big)^{2k}\Big(\frac{z_i+z_{i+1}}{\sqrt{2}}-z_l\Big)^{2k}\nonumber\\
				&\quad\qquad\qquad\prod_{i+1<m}\Big(z_m-\frac{z_i-z_{i+1}}{\sqrt{2}}\Big)^{2k}\Big(z_m-\frac{z_i+z_{i+1}}{\sqrt{2}}\Big)^{2k}\nonumber\\
				&=2^kz_{i+1}^{2k}\prod_{\substack{1\leq l<m\leq N\\m,l\neq i,i+1}}(z_m-z_l)^{2k}\prod_{l<i}\Big(\Big(\frac{z_i}{\sqrt{2}}-z_l\Big)^2-\frac{z_{i+1}^2}{2}\Big)^{2k}\prod_{i+1<m}\Big(\Big(z_m-\frac{z_i}{\sqrt{2}}\Big)^2-\frac{z_{i+1}^2}{2}\Big)^{2k}\\
				&\leq 2^kz_{i+1}^{2k}\prod_{\substack{1\leq l<m\leq 	N\\m,l\neq i,i+1}}(z_m-z_l)^{2k}\prod_{l<i}\Big(\frac{z_i}{\sqrt{2}}-z_l\Big)^{4k}\prod_{i+1<m}\Big(z_m-\frac{z_i}{\sqrt{2}}\Big)^{4k}\\
				&=:2^k z_{i+1}^{2k} \widetilde{\Pi}_{N,k,i}(z_{i+1}^*).
			\end{align*}
			Here, $z_{i+1}^*$ is given by \eqref{eq:projection}.
			Now, we recall that because $v\in E_i^{\frac{1}{n\sqrt{s_2-s_1}}}$
			%of the integration limits in \eqref{eq:integration_limits}
			the following bounds must hold,
			\begin{align}
				z_j&\leq z_{j+1} \text{ for } j\neq i-1,i,i+1\notag\\
				0\leq z_{i+1}&\leq 1/n\sqrt{2(s_2-s_1)},\notag\\
				\sqrt{2}z_{i-1}+z_{i+1}&\leq z_i\leq \sqrt2 z_{i+2}-z_{i+1}.\notag
			\end{align}
			We use these inequalities to define the set
			\begin{align*}
				\Lambda_{i,N}:=\{z\in\mathbb{R}^N:z_j\leq z_{j+1} \text{ for } j\notin \{i-1,i,i+1\}, \sqrt{2}z_{i-1}+z_{i+1}\leq z_i\leq \sqrt2 z_{i+2}-z_{i+1}\}.
			\end{align*}
			Therefore, our new integration set after the transformation is $\Lambda_{i,N}\cap \{0\leq z_{i+1}\leq 1/n\sqrt{2(s_2-s_1)}\}.$
			%Using the $(i+1)$th canonical basis vector $e_{i+1}$, we define $\widetilde{z}:=z_{i+1}^*+(z_i-\sqrt2 z_{i-1})e_{i+1}$  so that $\|z_{i+1}^*\|\leq \|z\|\leq \|\widetilde{z}\|$ for every $z\in\Lambda_{i,N}\cap \{0\leq z_{i+1}\leq 1/n\sqrt{2(s_2-s_1)}\}$. \textcolor{red}{We cannot do this. $z_{i+1}^*$ is N-1 dimensional and $\widetilde{z}$ shall be N dim. } 
			We define a new $N$-dimensional vector by $(\psi(z_{i+1}^*))_{i+1}:=z_i-\sqrt2 z_{i-1}$ and $(\psi(z_{i+1}^*))_j=z_j$ for every $j\neq i+1$ so that $\|z_{i+1}^*\|\leq \|z\|\leq \|\psi(z_{i+1}^*)\|$ for every $z\in\Lambda_{i,N}\cap \{0\leq z_{i+1}\leq 1/n\sqrt{2(s_2-s_1)}\}$. 
			Because $z_{i+1}\geq 0$, we can write
			%\begin{align}
			%   z_j&\leq z_{j+1} \text{ for } j\neq i-1,i,i+1\notag\\
			%	0\leq z_{i+1}&\leq 1/n\sqrt{2(s_2-s_1)},\notag\\
			%z_{i-1}\leq\frac{z_i}{\sqrt{2}}&\leq z_{i+2}%,\notag\\
			%\sqrt{2}z_{i-1}&\leq z_i
			%	.\label{eq:newzlimits}
			%\end{align}
			\begin{align*}
				\Lambda'_{i,N}:=\{z\in\mathbb{R}^N:z_j\leq z_{j+1} \text{ for } j\notin\{ i-1,i,i+1\}, z_{i-1}\leq z_i/\sqrt{2}\leq  z_{i+2}\}
			\end{align*}
			and we can enlarge the integration set to $\Lambda'_{i,N}\cap\{0\leq z_{i+1}\leq 1/n\sqrt{2(s_2-s_1)}\}$. Because the variable substitution $v\to z$ is a rotation, norms are preserved and with these bounds we find the upper bound 
			\begin{align*}
				& \int\limits_{E_i^{\frac{1}{n\sqrt{s_2-s_1}}}}e^{-\frac{\|u\|^2}{2}-\frac{\|v\|^2}{2} +\|u\|\left(\frac{\|x\|}{\sqrt{\varepsilon}}  +\|v\|\right)}w_k(v )    \intd v\\
				&\leq   \int\limits_{\Lambda_{i,N}\cap\{0\leq z_{i+1}\leq 1/n\sqrt{2(s_2-s_1)}\} }e^{-\frac{\|u\|^2}{2}-\frac{\|z\|^2}{2} +\|u\|\left(\frac{\|x\|}{\sqrt{\varepsilon}}  +\|z\|\right)}2^kz_{i+1}^{2k}   \widetilde{\Pi}_{N,k,i}(z_{i+1}^\ast) \intd z\\
				&\leq   \int\limits_{\Lambda_{i,N}\cap\{0\leq z_{i+1}\leq 1/n\sqrt{2(s_2-s_1)}\} }e^{-\frac{\|u\|^2}{2}-\frac{\|z_{i+1}^*\|^2}{2} +\|u\|\left(\frac{\|x\|}{\sqrt{\varepsilon}}  +\|\psi(z_{i+1}^*)\|\right)}2^kz_{i+1}^{2k}   \widetilde{\Pi}_{N,k,i}(z_{i+1}^\ast) \intd z\\
				&\leq  \int\limits_{\Lambda'_{i,N}\cap\{0\leq z_{i+1}\leq 1/n\sqrt{2(s_2-s_1)}\} }e^{-\frac{\|u\|^2}{2}-\frac{\|z_{i+1}^*\|^2}{2} +\|u\|\left(\frac{\|x\|}{\sqrt{\varepsilon}}  +\|\psi (z_{i+1}^*)\|\right)}2^kz_{i+1}^{2k}   \widetilde{\Pi}_{N,k,i}(z_{i+1}^\ast) \intd z\\
				&\leq \frac{1}{\sqrt2n^{2k+1}(s_2-s_1)^{k+1/2}}\int\limits_{p_{i+1}(\Lambda'_{i,N})} e^{-\frac{\|u\|^2}{2}-\frac{\|z_{i+1}^*\|^2}{2} +\|u\|\left(\frac{\|x\|}{\sqrt{\varepsilon}}  +\|\psi (z_{i+1}^*)\|\right)}   \widetilde{\Pi}_{N,k,i}(z_{i+1}^\ast) \intd z_{i+1}^*.
			\end{align*}
			The first inequality follows from the rotation $v\to z$ and from the upper bound on the weight function, the second inequality is obtained from the relationships between $z$, $z_{i+1}^*$ and $\psi (z_{i+1}^*)$, the third inequality is obtained by enlarging the integration set and the final inequality is obtained from the integral with respect to $z_{i+1}$. Note that in the second to last line, only the factor $z_{i+1}^{2k}$ depends on $z_{i+1}$, which yields the last line via an upper bound on the integral over $z_{i+1}$. 

			This last expression depends on the index $i=1,\ldots,N-1$, and the functions of $u$ in the integrand are of similar form to the functions of $v$ in the integral we used as our starting point.
			%so we can rewrite it as
			%\begin{align*}
			%\int\limits_{E_i^{\frac{1}{n\sqrt{s_2-s_1}}}}e^{-\frac{\|u\|^2}{2}-\frac{\|v\|^2}{2} +\|u\|\left(\frac{\|x\|}{\sqrt{\varepsilon}}  +\|v\|\right)}w_k(v )    \intd v\leq \frac{C_i(x,\epsilon,N,k)}{n^{2k+1}(2(s_2-s_1))^{k+1/2}}.
			%\end{align*}
			Therefore, we can use a similar bound and decomposition of the integral over $u$, this time using the index $j=1,\ldots,N-1$ and with the function $s_1(s_2-s_1)/s_2$ replacing $(s_2-s_1)$. Obviously, the integral that results from these operations is bounded due to the fact that the second order terms in the exponential have negative coefficients. Hence, we receive by similar calculations and the same type of rotation the bound
			\begin{align*}
				\frac{C_{ij}(x, \varepsilon, N,k)}{\sqrt2 n^{2k+1}(s_2-s_1)^{k+\frac{1}{2}}} \frac{s_2^{k+\frac{1}{2}}}{\sqrt2n^{2k+1} s_1^{k+\frac{1}{2}}(s_2-s_1)^{k+\frac{1}{2}}},
			\end{align*}
			where clearly $C_{ij}$ is a positive bounded constant. To obtain a bound on the total integral, it suffices to add over both $i$ and $j$, so we define
			\begin{align*}
				\widehat{C}(x, \varepsilon, N,k):=\sum_{i,j=1}^{N-1}C_{ij}(x, \varepsilon, N,k),
			\end{align*}
			and finally, we obtain
			\begin{align*}
				\mbb{P}^x(X_{s_1}\in E^{\frac{1}{n}},&X_{s_2}\in E^{\frac{1}{n}} )\\&\leq \widehat{C}(x, \varepsilon, N,k)\frac{1}{ 2c_k^2}\underbrace{\left(\frac{s_2-s_1}{s_2}\right)^{\frac{(kN+1)(N-1)}{2}-k}}_{\leq 1}\frac{1}{n^{4k+2}(s_2-s_1)^{k+\frac{1}{2}} s_1^{k+\frac{1}{2}}}\\
				&\leq \widehat{C}(x, \varepsilon, N,k)\frac{1}{2 c_k^2}\frac{1}{n^{4k+2} (s_2-s_1)^{k+\frac{1}{2}} s_1^{k+\frac{1}{2}}}\\
				&=:{C}(x, \varepsilon, N,k)\frac{1}{n^{4k+2}(s_2-s_1)^{k+\frac{1}{2}} s_1^{k+\frac{1}{2}}}
			\end{align*}
			with $\varepsilon\leq s_1  < s_2$. %\textcolor{blue}{I think there are no fundamental problems up to this point, except maybe details with the notation. I would like to make sure that the upper bound we find here, and in particular the constant $\widehat C$, is indeed finite. -SA}
			%\textcolor{red}{To ensure that I wanted to keep the esponential funciton dependent on $z_{i+1}^*$. For every + part there is a - part which is squared. }
		\end{proof}
		We prove the theorem by making use of the capacity argument, which will require the bounds we derived in the previous lemmas.
		\begin{proof}[Proof of Theorem \ref{theo_multidimensional_Hausdorff_dimension}]
			For the verification of the lower bound we consider an interval $[\varepsilon,t]\subset (0,\infty)$ and verify 
			$$\dim \left(X^{-1}(\partial W)\cap [\varepsilon,t] \right)\geq \frac{1}{2}-k$$ 
			according to the same scheme as \cite[Theorem 4.1 and 4.2]{LiuXiao98}. We construct for every possible Hausdorff dimension $0<\beta <\frac{1}{2}-k$ a positive measure $\mu $ on $X^{-1}(\partial W)\cap [\varepsilon,t]$ such that 
			\begin{align}
				\label{eq_measure_beta}
				\|\mu \|_\beta := \int\limits_\varepsilon^t \int\limits_\varepsilon^t \frac{\mu(\ud s_1)\mu (\ud s_2)}{|s_2-s_1|^\beta}, 
			\end{align}
			which implies $\dim \left( X^{-1}(\partial W)\cap [\varepsilon,t]\right)>\beta$ on $\{\mu >0\}$ due to the capacity argument, \Cref{lem_capacity}.
			Let $\mathcal{M}_\beta^+$ be the set of all non-negative measures on $\mathbb{R}^+$ with \eqref{eq_measure_beta}. Due to \cite{Adler1981} $\mathcal{M}_\beta^+$ is a complete metric space under the metric $\|\cdot \|_\beta$.
			We define a sequence of random positive measures on $\mathcal{B}(\mathbb{R}^+)$ by 
			\begin{align*}
				\mu_n(B, \omega ):=n^{2k+1} \int\limits_{[\varepsilon,t]\cap B}  \mb{1}[X_s(\omega)\in E^\frac{1}{n}]  \ud s.
			\end{align*}
			It was shown by Testard \cite{Testard1987} that, if there exist constants $K_1, K_2>0$ such that 
			\begin{align}
				\label{eq_proof}
				\mbb{E}^x(\|\mu_n\|)\geq K_1, \quad \mbb{E}^x(\|\mu_n\|^2)\leq K_2,\quad \mbb{E}^x(\|\mu_n\|_\beta)<\infty,
			\end{align}
			where $\|\mu_n\|=\mu_n([\varepsilon,t],\cdot )$, then there exists a subsequence $(\mu_{n_k})_k$ such that $\mu_{n_k}\to \mu $ in $\mathcal{M}_\beta^+$ and $\mu $ is strictly positive with probability at least $\frac{K_1^2}{2K_2}$. 
			Via a private communication with Y. Xiao, we have found that Testard's lemma can be adapted to our setting with minimal modifications, namely, the change in intervals from $[0,t]$ to $[\varepsilon,t]$.
			As $(X_t)_{t\geq 0}$ has continuous sample paths a.s., we see that $\supp(\mu_n)\subset\supp(\mu_m)$ whenever $n>m $, and in particular $\supp(\mu)\subset\supp(\mu_n)$ for every $n$. By the construction of $\mu_n$, any open Borel set $B\subset [\varepsilon,t]$ for which $\mu(B)>0$ must include a collision time with the boundary, that is, an element of $X^{-1}(\partial W)\cap [\varepsilon,t]$. Therefore, we see that $X^{-1}(\partial W)\cap [\varepsilon,t]$ is the support of $\mu$ (see \cite[p. 282]{Marcus1976}).
			Thus, our task is to prove \eqref{eq_proof}. By using the $1/2$-semi stability of the multivariate Bessel process we obtain
			%IMPORTANT!!!!
			%check the argument in line 897 on the book/paper found by Nicole
			\begin{align*}
				\mbb{E}^x(\|\mu_n\|)&= n^{2k+1}\int\limits_\varepsilon^t \mbb{P}^x(X_s\in E^\frac{1}{n}) \intd s\\
				&=n^{2k+1}\int\limits_\varepsilon^t P_{k}(s,E^\frac{1}{n}\cond{}x) \intd s\\
				&=n^{2k+1}\int\limits_\varepsilon^t P_{k}\left(1,E^\frac{1}{n\sqrt{s}}\big|\frac{x}{\sqrt{s}}\right)\intd s\\
				&\overset{\ref{lm:boundsA}}{\underset{}{\geq}} C_1n^{2k+1} n^{-2k-1}\int\limits_{\varepsilon}^t s^{-k-\frac{1}{2}}\intd s \\
				&=\frac{C_1}{\frac{1}{2}-k}\left(t^{\frac{1}{2}-k}-\varepsilon^{\frac{1}{2}-k}\right)=: K_1 > 0.
			\end{align*} 
			%\textcolor{blue}{Here, we add the new bounds/Lemmas. } 
			For the last two relationships in \eqref{eq_proof}, we use the result from Lemma~\ref{lm:doubleintegralA}.
			\begin{align*}
				\mbb{E}^x(\|\mu_n\|^2)&=n^{4k+2}\int\limits_\varepsilon^t \int\limits_\varepsilon^t \mbb{P}^x\left(X_{s_1}\in E^\frac{1}{n}, X_{s_2}\in \mbb{E}^\frac{1}{n}\right) \intd s_2 \intd s_1\\
				&\leq 2n^{4k+2}\int\limits_\varepsilon^t \int\limits_{s_1}^t \frac{{C}}{n^{4k+2}(s_2-s_1)^{k+\frac{1}{2}} s_1^{k+\frac{1}{2}}} \intd s_2 \intd s_1\\
				&= 2{C}\int\limits_\varepsilon^t \int\limits_{s_1}^t (s_2-s_1)^{-k-\frac{1}{2}} s_1^{-k-\frac{1}{2}} \intd s_2 \intd s_1\\
				&= \frac{2{C}}{1/2-k}\int\limits_\varepsilon^t  (t-s_1)^{\frac{1}{2}-k} s_1^{-k-\frac{1}{2}} \intd s_1\\
				&\leq \frac{2{C}}{1/2-k}t^{1/2-k}\int\limits_0^t  s_1^{-k-\frac{1}{2}} \intd s_1=\frac{2{C}}{(1/2-k)^2}t^{1-2k}=:K_2.
			\end{align*}
			This object is clearly positive and finite for $\varepsilon>0$, $0<k<1/2$. Now, we turn to the $\beta$-capacity for $0<\beta <1/2-k$; again, we make use of Lemma~\ref{lm:doubleintegralA}.
			\begin{align*}
				\mbb{E}^x(\|\mu_n\|_\beta)&=\mbb{E}^x\left( \int\limits_\varepsilon^t \int\limits_\varepsilon^t \frac{\mu(\ud s_1)\mu (\ud s_2)}{|s_2-s_1|^\beta} \right)\\
				&=2n^{4k+2}\int\limits_\varepsilon^t \int\limits_{s_1}^t \frac{\mbb{P}^x(X_{s_1}\in E^\frac{1}{n},X_{s_2}\in E^\frac{1}{n})}{(s_2-s_1)^\beta} \intd s_2 \intd s_1 \\
				&\leq%\overset{\ref{lm:boundsA}}{\underset{\ref{lm:boundsB}}{\leq}} 
				2{C}\int\limits_\varepsilon^t \int\limits_{s_1}^t (s_2-s_1)^{-k-\beta-\frac{1}{2}} s_1^{-k-\frac{1}{2}}  \intd s_2 \intd s_1 \\
				&=\frac{2{C}}{\frac{1}{2}-k-\beta}\int\limits_\varepsilon^t  (t-s_1)^{\frac{1}{2}-k-\beta} s_1^{-k-\frac{1}{2}}   \intd s_1\leq  \frac{2{C}t^{\frac{1}{2}-k-\beta}}{\frac{1}{2}-k-\beta}\int\limits_\varepsilon^t   s_1^{-k-\frac{1}{2}}   \intd s_1\\
				&= \frac{2{C}t^{\frac{1}{2}-k-\beta}}{(\frac{1}{2}-k-\beta)(\frac{1}{2}-k)} \left(t^{\frac{1}{2}-k}-\varepsilon^{\frac{1}{2}-k}\right)\leq\frac{2{C}t^{1-2k-\beta}}{(\frac{1}{2}-k-\beta)(\frac{1}{2}-k)} <\infty
			\end{align*}
			whenever $0\leq\beta<\frac12-k$.

			With this, we have proved that with $\mbb{P}^x$ probability at least $\frac{K_1^2}{2K_2}$ (depending on $k, N, \varepsilon, t, x$)
			\begin{align}
				\label{eq_lower_bound_hausdorff_dimension}
				\dim \left(X^{-1}(\partial W)\cap [\varepsilon,t] \right)\geq \frac{1}{2}-k
			\end{align} 
			for every $0<\varepsilon<t<\infty$.
			Next, we define a subset for the hitting times of the Weyl chamber as a sequence of well-defined stopping times $\tau_n$ as $\tau_0:=0$ and 
			\begin{align*}
				\tau_n:=\inf \{t\geq \tau_{n-1}+1: X_t\in \partial W\}.
			\end{align*}
			By the strong Markov property, $X_{\tau_n}\in \partial W$ almost surely, and by \eqref{eq_lower_bound_hausdorff_dimension} we determine
			\begin{align*}
				&\mbb{P}^x\left(\dim \left( X^{-1}(\partial W)\cap [0,\tau_{n+1}]\right)\geq \frac{1}{2}-k\cond{\Big } \mathcal{F}_{\tau_n}\right) \\
				\geq\ &\mbb{P}^x\left(\dim \left(X^{-1}(\partial W)\cap [\tau_n+\varepsilon,\tau_{n+1}]\right)\geq \frac{1}{2}-k\cond{\Big} \mathcal{F}_{\tau_n}\right)\\
				=\  &\mbb{P}^{X_{\tau_n}}\left(\dim \left(X^{-1}(\partial W)\cap [\varepsilon,\tau_{n+1}-\tau_n]\right)\geq \frac{1}{2}-k\right)\\
				\geq\  &\mbb{P}^{X_{\tau_n}}\left(\dim \left( X^{-1}(\partial W)\cap [\varepsilon,1]\right)\geq \frac{1}{2}-k\right)\geq \frac{K_1^2}{2K_2}
			\end{align*}
			for some $0<\varepsilon<1$.	The sequence $A_n:= \{ \dim \left( X^{-1}(\partial W)\cap [0,\tau_{n}]\right)\geq \frac{1}{2}-k\}\in \mathcal{F}_{\tau_n}$ fulfills 
			\begin{align*}
				\sum\limits_{n\in \mathbb{N}} \mbb{P}^x(A_n \cond{} \mathcal{F}_{\tau_{n-1}})\geq \sum\limits_{n\in\mathbb{N}} \frac{K_1^2}{2K_2} = \infty
			\end{align*}
			almost surely and hence by the conditional Borel-Cantelli lemma \cite[Corollary 13.3.38]{bremaud2020probability},
			\begin{align*}
				\left\{ \sum\limits_{n\in \mathbb{N}} \mbb{P}^x(A_n \cond{} \mathcal{F}_{\tau_{n-1}}) = \infty\right\}\equiv \left\{ \sum\limits_{n\in\mathbb{N}}\mb{1}[A_n] =\infty \right\},
			\end{align*}
			we conclude that
			\begin{align*}
				\mbb{P}^x\left(\dim \left(X^{-1}(\partial W)\cap [0,\tau_{n}]\right)\geq \frac{1}{2}-k  \textrm{ for infinitely many }n \right)=1.
			\end{align*} 
			It only remains to prove that $\sup\{t : X_t\in \partial W\}=\infty$ almost surely. We set $T:=\inf\{t>0: X_t\in \partial W\}$, which satisfies $\mbb{P}^x(T<\infty)=1$ \cite{Demni09}, and deduce
			\begin{align*}
				\mbb{P}^x(\tau_{n+1}<\infty)&= \mbb{E}^x(\mb{1}[\tau_{n+1}<\infty])\\
				&=\mbb{E}^x(\mbb{E} (\mb{1}[\tau_{n+1}<\infty]\cond{}  \mathcal{F}_{\tau_n +1}))\\
				&=\mbb{E}^x(\mbb{E} (\mb{1}[\inf \{t\geq 0: X_{t+\tau_{n}+1}\in \partial W\} <\infty]\cond{}  \mathcal{F}_{\tau_n +1}))\\
				&=\mbb{E}^x(\mbb{E} (\mb{1}[\inf \{t\geq 0: X_{t}\in \partial W\} <\infty]\cond{}  x_0=X_{\tau_n +1}))\\
				&=\mbb{E}^x(\mbb{E} (\mb{1}[T <\infty]\cond{}  x_0=X_{\tau_n +1}))\\
				&=\mbb{E}^x(\mb{1}[T <\infty])=\mbb{P}^x(T<\infty)=1.
			\end{align*}
			The fourth line is valid due to the strong Markov property. Consequently, 
			\begin{align*}
				\mbb{P}^x(\forall n \in \mbb{N}: \tau_n<\infty)&=\mbb{P}^x\left(\bigcap_{n\in \mbb{N}}\{\tau_n<\infty\}\right)\\
				&= \lim_{n\to \infty}\mbb{P}^x(\tau_n<\infty)=1.
			\end{align*}
			Let us consider $\omega\in \{\forall n \in \mbb{N}: \tau_n<\infty\}$ and $C>0$. Due to the definition of $\tau_n$, $\tau_n\geq n$. Hence, $C< \tau_{n}$ for every $n\geq [C]+1$ (here, $[C]$ is the floor function of $C$). This, combined with $\tau_{n}(\omega)<\infty$ yields 
			\begin{align*}
				\{\forall n\in \mbb{N}: \tau_n<\infty\}\subset \{\sup\{t:X_t\in \partial W\}=\infty\}
			\end{align*} 
			and 
			\begin{align*}
				\mbb{P}^x(\{\sup\{t:X_t\in \partial W\}=\infty\})\geq \mbb{P}^x(\{\forall n\in \mbb{N}: \tau_n<\infty\} )=1.
			\end{align*}
		\end{proof}
		
		\section{Concluding remarks}\label{sc:conclusions}
		
		We have shown that collision times for multivariate Bessel processes of type $A_{N-1}$ have a Hausdorff dimension of the same form as that of a Bessel process. This is one more aspect that multivariate Bessel processes share with their single-variable counterpart, but more importantly, this result characterizes the behavior of these particle systems over their parameter space. In particular, Theorem~\ref{th:main} clearly establishes the change in behavior that occurs at $k=1/2$, namely, the point where these systems become non-colliding as $k$ grows. From a physics perspective, if we were to regard the Hausdorff dimension as a thermodynamic function, our result points to a behavior similar to a solid-liquid transition as $k$ decreases toward the critical value $k=1/2$. More explicitly, for lower temperatures ($k\geq 1/2$) there are no collisions, meaning that particle motion is strongly restricted by the long-range interaction (solid-like behavior); in contrast, for higher temperatures ($0<k<1/2$) the repulsion is still present but it is weak enough to allow collisions (liquid-like behavior). Finally, at infinite temperature ($k=0$) the repulsion disappears completely and particles move freely up to collisions (gas-like behavior).
		
		From the proofs presented here, it is easily seen that the Hausdorff dimension of collisions between neighboring particles behaves similarly. It suffices to consider the set
		$\partial W^i:= \{y\in \overline{W} : y_{i+1}=y_i\}\subset \partial W,$ and notice that our proofs work equally well with it, so we conclude that
		\begin{align*}
			\dim\big(X^{-1}(\partial W^i)\big)=\frac12-\min\Big(\frac12,k\Big)
		\end{align*}
		almost surely.
		In the proofs we can directly see that we already calculated the corresponding Lemmas by just looking at the smaller edge set $E^r_i\subset E^r$.

		We have only proved the result for the root system of type $A_{N-1}$, but we expect a similar result to be valid for multivariate Bessel processes with small enough multiplicities of all other types, as they are $1/2$-semi-stable and are known to hit the boundaries of their Weyl chambers almost surely. We conjecture that, for a process $(X_t)_{t\geq0}$ with root system $R$ and multiplicity function $k:R\to (0,\infty)$, we get as Hausdorff dimension $$\dim(X^{-1}(\partial W))=\frac12-\min\Big(\frac12,\min\limits_{\alpha\in R} k(\alpha)\Big)$$ 
		almost surely. Because each multiplicity represents the strength of the repulsion from its corresponding borders of the Weyl chamber, if any multiplicity is less than $1/2$ we can expect the process to collide with those borders. In order to generalize this result, it will be necessary to prove analogues of Lemma~\ref{lm:boundsA}, which in turn require asymptotics for the Dunkl-Bessel function similar to those obtained by Graczyk and Sawyer for the case $A_{N-1}$, and \Cref{lm:doubleintegralA}.
		
		An important feature of this paper is that the particle number $N$ remains fixed throughout. We expect that Theorem~\ref{th:main} remains valid in the limit $N\to\infty$, as there are related results that suggest this to be the case. One such result is given in \cite{Andraus20}, where the dynamics of Dunkl jump processes are considered. The latter processes are the discontinuous part of the corresponding Dunkl process. For instance, in the $A_{N-1}$ case the discontinuous part is a permutation of the labels of particles in the Dyson model. For $k>1/2$, the jump process is a Poisson random walk over the symmetric group where each step is a pairwise permutation, and the probability of each step is given by the inverse of the squared distance between the particles involved in the label exchange. There, it was shown that the total jump rate per particle per unit time diverges as $k\downarrow 1/2$ in the limit where $N\to\infty$. Moreover, the jump process dynamics are undefined for $0<k<1/2$. This fact suggests that the Hausdorff dimension of collision times may have the same form even in the infinite-particle limit.
		
		Though we expect that the results given here can be extended to all types of Dunkl processes, the semi-stability requirement rules out several interesting multivariate processes. For example, the $\beta$-Jacobi process \cite{Demni10} and all other multiple-particle stochastic systems defined in a bounded subset of the real line are not 1/2-semi-stable and are therefore outside of the scope of the result.

		\section*{Acknowledgments} 
		
		The authors would like to thank N. Hatano, J. Nagel, and J.H.C. Woerner for insightful comments on this work, to  M. Voit for pointing out the results in \cite{GraczykSawyer}, and Y. Xiao for his comments on the application of \cite{Testard1987} in the proof of Thm.~\ref{theo_multidimensional_Hausdorff_dimension}. Finally, the authors would like to thank two anonymous referees for thoughtful comments which led to the improvement of this paper. N.H. is completely supported by DFG-GRK 2131, especially for the trip to Japan to prepare this paper. S.A. is supported by the JSPS Kakenhi Grant number JP19K14617.
		
		\bibliography{bibtex}
	\end{document}